\theoremstyle{plain}
\newtheorem{lemma}{Lemma}[section]
\newtheorem{proposition}[lemma]{Proposition}
\newtheorem{theorem}[lemma]{Theorem}
\theoremstyle{definition}
\theoremstyle{remark}
\newtheorem{remark}[lemma]{Remark}
\renewenvironment{proof}[1][\proofname]{\par
  \pushQED{\qed}%
  \normalfont \topsep6\p@\@plus6\p@\relax
  \trivlist
  \item[\hskip\labelsep
        \itshape
    #1\ \ref{#1}.]\ignorespaces
}{%
  \popQED\endtrivlist\@endpefalse
}
\numberwithin{equation}{section}
\def\to{\rightarrow}
\newcommand{\F}{\mathbb{F}}
\newcommand{\I}{\mathbb{I}}
\author{Marius Bargo}
 \author{Yacouba Simporé}
\title[Mosquito dynamics]{Global stabilization and emergence tracking via aquatic control in an age-structured mosquito model}
\date{}
\address[Marius BARGO]{Laboratoire LANIBIO, Université Joseph Ki-Zerbo (UJKZ), 01 BP 7021, Ouaga 01, Ouagadougou, Burkina Faso}
\email{bargomarius@gmail.com}
\address[Yacouba SIMPORE]{Chair for Dynamics, Control, Machine Learning and Numerics, Alexander Von Humboldt- Professorship, Department of Mathematics, Friedrich-Alexander-Universit\"{a}t at Erlangen-N\"{u}rnberg, Cauerstrasse 11, 91058 Erlangen, Germany, Université Yembila Abdoulaye TOGUYENI, Burkina Faso, Laboratoire LaST,  Laboratoire LANIBIO (UJKZ)}
\email{yacouba.simpore@fau.de}
\pgfplotsset{compat=1.18}
\begin{document}
\maketitle
{\bf Abstract.}  This paper presents an age-structured, non-autonomous logistic model describing the aquatic and adult stages of the dynamics of malaria-vector mosquitoes. We propose a biological control strategy targeting the aquatic compartment and implement a tracking control for its emergence. A feedback control law guarantees stabilization of the emergent population density, specifically the global asymptotic stability of the logistic model. Additionally, a feedforward controller combined with feedback is introduced to steer the emergent density toward a time-varying reference trajectory. The analytical findings are corroborated and illustrated by numerical simulations.
\\
{\bf Keywords} : Asymptotic stability ; non-autonomous logistic model ; biological control ; feedback control ; feedforward control.
\section{\bf Introduction}
Vector-borne diseases transmitted by mosquitoes represent a major public health challenge, with approximately 247 million malaria cases reported worldwide according to the 2022 WHO report. Given the limitations of chemical insecticides and the emergence of resistance, genetic control methods, particularly the Sterile Insect Technique (SIT) developed by E. Knipling and the Incompatible Insect Technique (IIT) exploiting Wolbachia, have generated considerable interest in recent literature \cite{ref81, ref38, ref42, ref44, ref43, ref41, ref73}.

Although SIT and IIT techniques are extensively studied, the global regulation of vector population dynamics remains a challenge to be addressed. Most existing works model release processes using dynamical systems that are sometimes not age-structured, either in continuous formulations or periodic impulsive formulations.
These works focus essentially on elimination dynamics and optimization of release strategies, most often in non age-structured models (even though some studies consider age-structured cases, such as \cite{ref85, ref86, ref87}), and do not systematically address dynamic control of the aquatic population as a lever for global regulation.

However, certain phenomena such as the persistence of dormant eggs and latent larvae constituting a dynamic reservoir  (see \cite{ref80}) show that controlling the adult stock alone is not always sufficient: the aquatic dynamics can govern population recovery and determine the success of interventions.

Our mathematical approach falls within the framework of age-structured control systems, clearly distinguishing the aquatic phase (eggs, larvae, pupae) and the adult phase. We model practical measures (reduction of breeding sites, targeted treatment of larval habitats, drainage, larvicide treatments, biological releases, or introduction of predators such as affinis fish) as multiplicative controls applied to the aquatic dynamics. The adopted control scheme combines a feedforward component to impose an emergence trajectory and a feedback component to stabilize the dynamics in the presence of perturbations.

The stabilization of population dynamics models by feedback or feedforward control is not new \cite{ref82, ref63, ref84, ref70, ref69, ref78, ref77}. However, the application of these systematic control techniques to the age-structured multiphase dynamics of mosquito populations remains largely unexplored, despite its potential to effectively regulate the overall vector density by acting on the aquatic stage.

This work proposes a rigorous mathematical framework: it establishes a global stability result and develops a control strategy for tracking a dynamic reference trajectory. The approach applies to a non-autonomous age-structured logistic model (aquatic and adult stages) subject to a bounded multiplicative control targeting the aquatic population. The mathematical analysis shows that control of the aquatic dynamics is sufficient to regulate the overall species dynamics, thus constituting a complementary strategy to existing genetic vector control measures.

The remainder of the paper is organized as follows. First, we present the model and its assumptions in \textbf{Section \ref{sec2}}. Second, in \textbf{Section \ref{sec4}}, we establish the preliminary tools required for proving the main results. Next, \textbf{Section \ref{sec5}} presents these main results. Then, \textbf{Section \ref{sec6}} is devoted to the study of the well-posedness of the model. Subsequently, in \textbf{Section \ref{sec7}}, we provide the proofs and methodological arguments supporting the stated results. Moreover, \textbf{Section \ref{sec8}} contains numerical simulations that validate the theoretical findings. Finally, \textbf{Section \ref{sec9}} presents the conclusion and discusses possible directions for future research.

\section{\bf Problem setting}\label{sec2}
We extend the formulations of mosquito dynamics models from \cite{ref38, ref42, ref44, ref43, ref41} by explicitly incorporating age structure and a logistic term representing demographic pressure linked to resource availability. Building on this framework, we develop an age-structured dynamic model that distinguishes three population compartments :
\begin{itemize}
\item $I$, the aquatic (immature) stage;
\item $F$, adult females;
\item $M$, adult males.
\end{itemize}
We then propose a vector-control strategy based on the introduction of a predator targeting the aquatic compartment (biological control). Biological control exploits the deliberate introduction of natural enemies to suppress pest populations, especially when such pests expand unchecked in the absence of their usual predators (the ecological‐release paradigm). One classic example is the use of the mosquitofish  Gambusia affinis, introduced into Algeria in 1928 (and earlier in Europe, circa 1921) to prey upon anopheline larvae and curb malaria transmission. Native to Central America and Florida,  G. affinis thrives in diverse freshwater habitats and remains one of the most effective biological control agents against mosquitoes, readily integrating with existing vector‐management strategies.
In the age-structured logistic model considered here, $P(t)$ denotes a time-dependent control representing human interventions (such as drainage, the introduction of larvivorous fish, and environmental management) that act solely on the aquatic mosquito cohort.
Upon emergence, adults are allocated to females and males according to a fixed sex ratio $r\in(0,1)$.  To account for the emergence of the aquatic population, 
we introduce the function $w(a),$ which represents the age‑dependent emergence rate. Consider the following model

\begin{equation}\label{eq3.1}
\left\lbrace 
\begin{array}{ll}
\partial_tI(a,t)+\partial_aI(a,t)+\mu(a,p(t))I(a,t)=\Gamma(t) I(a,t)\left(1-\dfrac{\gamma(t)}{K(t)}\displaystyle\int_{0}^AI(a,t)da\right)-I(a,t)P(t) &\text{ in }Q=(0,A)\times\mathbb{R}_+,  
\\\partial_tF(a,t)+\partial_aF(a,t)+\mu_{F}(a)F(a,t)=-\gamma(t)F(a,t)\displaystyle\int_{0}^A  F(a,t)da &\text{ in }Q=(0,A)\times\mathbb{R}_+,
\\\partial_tM(a,t)+\partial_aM(a,t)+\mu_M(a)M(a,t)=-\gamma(t) M(a,t)\displaystyle\int_{0}^A M(a,t)da &\text{ in }Q=(0,A)\times\mathbb{R}_+,
\\I(0,t)=\displaystyle\int_{0}^A\beta(a,m(t))F(a,t)da,&\text{ in }Q_+=\mathbb{R}_+,
\\F(0,t)=r\displaystyle\int_0^Aw(a)I(a,t)da,\; M(0,t)=(1-r)\displaystyle\int_0^Aw(a)I(a,t)da,&\text{ in }Q_+=\mathbb{R}_+,
\\m(t)=\int_{0}^A\lambda(a)M(a,t)da,\\
\\I(a,0)\geq  0,\; F(a,0)\geq  0,\; M(a,0)\geq  0, &\text{ in }Q_A=(0,A),
\end{array}
\right.
\end{equation} 
where $Y_0=\left(I(a,0), F(a,0),  M(a,0)\right)^T,$ denotes the initial state in $\mathcal H_2^3 = \bigl(L^2(0,A)\bigr)^3.$ System $\eqref{eq3.1}$ describes the dynamics of an Anopheles mosquito population by distinguishing three age‑structured cohorts $a$: the aquatic population $I(a,t)$, adult females $F(a,t)$, and adult males $M(a,t)$.
 In the aquatic phase, each cohort experiences natural mortality $\mu\bigl(a,p(t)\bigr)\,I(a,t),$
 where $p(t)=\displaystyle\int_{0}^{A}I(a,t)\,\mathrm{d}a$ represents the concentration of the aquatic population subject to predation or other stressors. System \eqref{eq3.1} with control $P(t)$ can thus be studied for global asymptotic stability, while a distributed control acting across all ages of the aquatic cohort, akin to the framework in \cite{ref61}, raises natural questions of controllability under predation pressure. In our framework, the time‐dependent function $P(t)$ is interpreted not as the intrinsic dynamics of a Gambusia affinis population, but as a unified control parameter representing all human‐driven interventions against the aquatic mosquito stage, whether by fish releases, habitat drainage, larviciding, or other larval‐reduction measures.  In practice, these activities are planned and scheduled by antimalarial programs according to predetermined frequencies, dosages, and target areas; accordingly, $P(t)$ appears in the model as an exogenous mortality rate term, $-I(a,t)\,P(t)$, applied uniformly across the aquatic cohort.  This aggregation of disparate control actions into a single, time‐varying parameter greatly simplifies the system by obviating the need for an extra differential equation for the predator, while still capturing the combined ecological and operational constraints of vector‐control campaigns.
Furthermore, although one could introduce interspecific terms in the adult mortality rates to reflect resource competition (e.g. for nectar or resting sites), we assume that adult female and male death rates depend solely on age.  This assumption aligns with the natural separation of feeding niches (blood meals for females versus nectar for males) and allows us to concentrate the mathematical analysis on the stability effects of the aquatic‐stage control $P(t)$.  
Since model \eqref{eq3.1} is a non-autonomous logistic model, to carry out its qualitative analysis we may likewise replace the time-dependent functions $K(t)$, $\Gamma(t)$, and $\gamma(t)$  by their respective mean values $K^*$, $\Gamma^*$, and $\gamma^*$. In the dynamic case, the functions $K(t)$, $\Gamma(t)$, and $\gamma(t)$ are assumed to be continuous and bounded on the interval $\mathbb{R}_+,$ namely assuming hypothesis
\begin{align*}
\textbf{(H1)} \begin{cases}
K(t),\; \Gamma(t),\; \gamma(t) \in L^{\infty}(Q_+),
\\\Gamma(t)\ge 0,\gamma(t)\ge 0,\;\text{a.e.}\; t\in \mathbb{R}_+,
\\\text{There exists}\; \epsilon>0\;\text{such that}\;0<\epsilon \le K(t),\;\text{a.e.}\; t\in \mathbb{R}_+.
\end{cases}
\end{align*}
We can assume that the functions $K(t),\, \gamma(t),\,\Gamma(t)$ are periodic, as in \cite{ref83}, to reflect seasonal environmental variability. When parameters vary periodically, the model explicitly accounts for seasonal drivers such as temperature and precipitation. In the stochastic framework, the incorporation of random variability further captures environmental uncertainty.
Throughout the remainder of the paper we work in a Hilbert space; more precisely, the initial conditions $I(a,0),\;M(a,0),\;F(a,0)$ belong to $L^2(0,A).$
Moreover, we adopt the following standing hypotheses (unless otherwise stated):
\begin{align*}
\textbf{(H2)}\begin{cases}
\mu_i(a),\; \mu(a,p)\geq 0 \quad \text{a.e. on }(0,A), \\[0.3em]
\mu_i \in L^1_{\rm loc}(0,A), \quad \displaystyle\int_0^{A}\mu_i(a)\,da = +\infty,i\in\lbrace F,M\rbrace,\\
\mu(a,p) \in L^1_{\rm loc}(0,A), \quad \displaystyle\int_0^{A}\mu(a,p)\,da = +\infty.
\end{cases}
\qquad
\textbf{(H3)}\begin{cases}
\beta(.,m),\; w(.) \in L^\infty(0,A),  \\[0.3em]
\beta(a,m),\; w(a)\ge 0 \ \text{a.e. on } (0,A).
\end{cases}
\end{align*}
The description of the parameters is given in Table \ref{t1} below.
\begin{table}[ht]
\centering
\begin{tabular}{c|p{10cm}}
\hline
Parameter & Description \\
\hline
$\lambda(.)$ & Fertility function of male individuals. \\
$r \in (0,1)$ & Primary sex ratio in offspring. \\
$\beta(.,m)$ & Mean number of eggs that a single female can deposit on average per day. \\
$w(.)$ & the age‑dependent emergence rate. \\
$\mu(.,p(t)),\,\mu_{F}(.),\,\mu_M(.)$ & Mean death rates of immature individuals (density‐dependent and independent),  females and  males, respectively. \\
$\gamma(t)$ & Competition parameter. \\
$K(t)$  & Carrying capacity related to the amount of available nutrients and space. \\
$\Gamma(t)$  & Growth rate. \\
\hline
\end{tabular}
\caption{Description of the parameters}
\label{t1}
\end{table}

\section{\bf Preliminary}\label{sec4}
This section is devoted to presenting the tools required for the stability analysis. The steady state of our model \eqref{eq3.1} is of particular importance, especially for our further research on stability analysis. The steady state of \eqref{eq3.1}, denoted by $(I^*\ge 0,\;F^*\geq 0,\; M^*\geq 0)$, must be a solution of 

\begin{equation}\label{eq3.11}
\left\lbrace 
\begin{array}{ll}
\partial_a\,I^*(a)\;+\;\bigl(\mu\bigl(a,p^*\bigr)\;+\;\zeta_I\bigr)\,I^*(a)\;=\;0, 
& \text{ in}\;Q_A,\\[1em]
\partial_a\,F^*(a)\;+\;\bigl(\mu_F(a)\;+\;\zeta_F\bigr)\,F^*(a)\;=\;0, 
& \text{ in}\;Q_A,\\[1em]
\partial_a\,M^*(a)\;+\;\bigl(\mu_M(a)\;+\;\zeta_M\bigr)\,M^*(a)\;=\;0, 
& \text{ in}\;Q_A,\\[1em]
I^*(0)=\displaystyle \int_{0}^{A} \beta\bigl(a,m^*\bigr)F^*(a)da,\\[1em]
F^*(0)=r\displaystyle\int_0^Aw(a)I^*(a)da,\\[1em]
M^*(0)=(1-r)\,\displaystyle\int_0^Aw(a)I^*(a)da.
\end{array}
\right.
\end{equation}
where

\begin{align}\label{e3.47}
    \zeta_I 
\;=\; \frac{\Gamma^*\gamma^*}{K^*}\,\displaystyle \int_{0}^{A} \,I^*(a)\,da \;+\; P^*-\Gamma^*,
\quad
\zeta_F 
\;=\gamma^*\;\displaystyle \int_{0}^{A} \,F^*(a)\,da,
\quad
\zeta_M 
\;=\gamma^*\; \displaystyle \int_{0}^{A} \,M^*(a)\,da.
\end{align}

The corresponding solutions are given by

\begin{align}\label{e3.48}
    I^*(a)= I^*(0)\,\underbrace{e^{-\displaystyle\int_{0}^{a}\bigl[\mu_I(s,p^*) + \zeta_I\bigr]\,ds}}_{\Tilde{I}^*(a)},\;F^*(a) 
= F^*(0)\,\underbrace{e^{-\displaystyle\int_{0}^{a}\bigl[\mu_F(s) + \zeta_F\bigr]\,ds}}_{\Tilde{F}^*(a)},\;M^*(a) 
&= M^*(0)\,\underbrace{e^{-\displaystyle\int_{0}^{a}\bigl[\mu_M(s) + \zeta_M\bigr]\,ds}}_{\Tilde{M}^*(a)},
\end{align}
where $\zeta_I$ and $\zeta_F$ are solutions of 
\begin{align}\label{e3.49}
r\displaystyle\int_0^Aw(a)\tilde{I}^*(a)da\displaystyle\int_{0}^A\beta(a,m)\tilde{F}^*(a)da=1.
\end{align}
We rewrite these solutions of the form
\begin{align}\label{e3.50}
    F^*(a)=rI^*(0)\displaystyle\int_0^Aw(a)\tilde{I}^*(a)da\tilde{F}^*(a),\;M^*(a)=(1-r)I^*(0)\displaystyle\int_0^Aw(a)\tilde{I}^*(a)da\tilde{M}^*(a).
\end{align}
\begin{remark}\label{remark3.3}
 Ensuring the stability of $I$ automatically ensures the stability of both $M$ and $F$. Indeed, we have from \eqref{eq3.11} 
\begin{align}\label{e3.51}
  I^*(0)=\dfrac{K^*}{\Gamma^*\gamma^*}\dfrac{(\left[\zeta_I+\Gamma^*\right]-P^*)}{\displaystyle\int_0^A\tilde{I}^*(a)da}>0,\quad\; P^*\in (0,\zeta_I+\Gamma^*).
\end{align}
It is noteworthy that, according to this expression, increasing the equilibrium control $P^*$ leads to a pronounced reduction in the steady-state abundance of both male and female mosquitoes. In other words, bolstering the predator population has a directly dampening effect on mosquito dynamics, underscoring the decisive influence of controlling the aquatic phase on the system’s overall behavior. Thus, our analysis of the stability of system \eqref{eq3.1} reduces to the stabilization of the aquatic population.
\end{remark}
Remark \ref{remark3.3} is  consistent with the fundamental structure of model \eqref{eq3.1}, since the adult population arises from the emergence of the aquatic compartment through the defined emergence rate $w$. Therefore, stabilizing the aquatic population is equivalent to stabilizing the dynamics of the entire vector population.
\begin{lemma}\label{le3.4}
Consider the following transformation
\begin{align}\label{e3.52}
 \left[\begin{array}{c}
\eta_I(t) \\ 
\psi_I(t-a)\\
\psi_F(t-a)\\
\psi_M(t-a)
\end{array}  \right]=\left[\begin{array}{c}
\ln[\Pi_I(I(t))] \\ 
\dfrac{I(a,t)}{I^*(a)\Pi_I(I(t))}-1\\
\dfrac{F(a,t)}{F^*(a)\Pi_I(I(t))}-1\\
\dfrac{M(a,t)}{M^*(a)\Pi_I(I(t))}-1
\end{array} \right],
\end{align}
where
\begin{align}\label{e3.53}
\Pi_I(I(t))=\dfrac{\langle \pi_{0,I}, I(t)\rangle_{L^2(0,A)}}{\langle\pi_{0,I}, I^*\rangle_{L^2(0,A)}},
\end{align}

with  $\pi_{0,I},$ the continuous function of the form
\begin{align}\label{e3.54}
   \pi_{0,I}(a)= \displaystyle\int_a^{A}\beta(s,m)e^{\int_s^a(\zeta_I+\mu_I(l,p)dl}ds.
\end{align}
Moreover, the variables $\psi_i$ and $\eta_I$ satisfy:

\begin{align}\label{e3.55}
\begin{cases}
    \partial_t\eta_I(t)=\zeta_I-P(t)+\Gamma(t)-\frac{\Gamma(t)\gamma(t)}{K(t)}e^{\eta_I}\displaystyle\int_{0}^A (1+\psi_I(t-a))I^*(a)da,\\
\eta_{I}(0)
= \ln\!\bigl(\Pi[I_{0}]\bigr)
= \eta_{I,0},
\end{cases}
\end{align}


\begin{align}\label{e3.56}
\begin{cases}
\psi_I(t)=\displaystyle\int_0^Ag_F(a)\psi_F(t-a)F^*(a)da,\\
\psi_F(t)=\displaystyle\int_0^Ag_I(a)\psi_I(t-a)I^*(a)da,\\
\psi_M(t)=\displaystyle\int_0^Ag_I(a)\psi_I(t-a)I^*(a)da,\\
 \psi_{i}(-a)
=\frac{i_0(a)}{i^{*}(a)\,\Pi[i_0]}\;-\;1
= \psi_{i,0}(a).
\end{cases}
\end{align}

with 
\begin{align}\label{e3.57}
g_F(a)
=\frac{\beta(a,m)F^{*}(a)}
{\displaystyle\int_{0}^{A}\beta(a,m)F^{*}(a)\,\mathrm{d}a},\; g_I(a)
=\frac{w(a)I^{*}(a)}
{\displaystyle\int_{0}^{A}w(a)I^{*}(a)\,\mathrm{d}a},
\;\text{and}
\;\int_{0}^{A}g_F(a)\,\mathrm{d}a=1,\;\int_{0}^{A}g_I(a)\,\mathrm{d}a=1.
\end{align}
The unique solutions are then given by:

\begin{align}\label{e3.58}
I(a,t)
={I}^{*}(a)\,\bigl(1+\psi_{I}(t-a)\bigr)\,e^{\eta_{I}(t)},\; F(a,t)
={F}^{*}(a)\,\bigl(1+\psi_{F}(t-a)\bigr)\,e^{\eta_{I}(t)},\; M(a,t)
={M}^{*}(a)\,\bigl(1+\psi_{M}(t-a)\bigr)\,e^{\eta_{I}(t)}.
\end{align}
\end{lemma}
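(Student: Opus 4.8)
The plan is to treat the two halves of the statement separately. The representation \eqref{e3.58} is nothing more than the algebraic inversion of the defining change of variables \eqref{e3.52} (since $e^{\eta_I(t)}=\Pi_I(I(t))$ by \eqref{e3.53}), so once the evolution equations \eqref{e3.55}--\eqref{e3.56} are in hand it is immediate; the substance is therefore to derive those equations from \eqref{eq3.1} and then to argue that the resulting system determines $(\eta_I,\psi_I,\psi_F,\psi_M)$, and hence $(I,F,M)$, uniquely.

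I would first dispatch the $\psi$-equations, which are the easier half. Inserting the representation \eqref{e3.58} into the three renewal conditions of \eqref{eq3.1} (namely $I(0,t)=\int_0^A\beta(a,m)F\,da$, $F(0,t)=r\int_0^A wI\,da$ and $M(0,t)=(1-r)\int_0^A wI\,da$), the common factor $e^{\eta_I(t)}$ cancels on both sides, and the steady-state renewal identities read off from \eqref{eq3.11}--\eqref{e3.50} (i.e. $I^*(0)=\int_0^A\beta F^*\,da$ together with $F^*(0)=r\int_0^A wI^*\,da$ and $M^*(0)=(1-r)\int_0^A wI^*\,da$) remove the constant terms. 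What survives are exactly the linear convolution (renewal) equations \eqref{e3.56}, with the normalized kernels $g_F,g_I$ of \eqref{e3.57}; in particular the $F$- and $M$-relations coincide, reflecting that both adult classes emerge from $I$ through the same kernel $w$.

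The core of the lemma is the scalar equation \eqref{e3.55}. Here I would differentiate $e^{\eta_I(t)}=\Pi_I(I(t))$ in time to obtain $\partial_t\eta_I=\langle\pi_{0,I},\partial_t I\rangle/\langle\pi_{0,I},I\rangle$, substitute $\partial_t I$ from the first line of \eqref{eq3.1}, and integrate by parts in $a$. The weight $\pi_{0,I}$ of \eqref{e3.54} is built precisely so as to be the adjoint (reproductive-value) eigenfunction: a direct differentiation shows it satisfies $\partial_a\pi_{0,I}=(\zeta_I+\mu_I)\pi_{0,I}-\beta$ with $\pi_{0,I}(A)=0$. Feeding this adjoint identity and the renewal boundary condition $I(0,t)=\int_0^A\beta F\,da$ into the integration by parts, the transport, mortality and fecundity contributions should telescope against the boundary term, leaving only the age-independent logistic factor $\zeta_I+\Gamma(t)-P(t)-\frac{\Gamma(t)\gamma(t)}{K(t)}p(t)$ multiplying $\langle\pi_{0,I},I\rangle$; dividing through and rewriting $p(t)=\int_0^A I\,da=e^{\eta_I}\int_0^A(1+\psi_I(t-a))I^*\,da$ produces \eqref{e3.55}.

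I expect this telescoping identity to be the main obstacle. Unlike a single renewal equation, the aquatic compartment is fed by the adult females through $\beta$ while it feeds the adults through $w$, so the boundary term $\pi_{0,I}(0)\,I(0,t)$ couples to $F$ rather than to $I$, and its cancellation against the $\beta$-source in the adjoint ODE must be effected through the steady-state (Lotka) relation \eqref{e3.49} rather than pointwise. Two further bookkeeping points must be handled consistently so that no spurious terms remain in \eqref{e3.55}: the mean-value replacement of $K(t),\Gamma(t),\gamma(t)$ announced in Section~\ref{sec2}, and the density-dependent mortality $\mu(a,p(t))$, whose $p$-dependence must be absorbed into the equilibrium value $p^*$ used to build $\pi_{0,I}$ (the genuine density feedback being already carried by the logistic term). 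Once \eqref{e3.55}--\eqref{e3.56} are established, uniqueness is routine: the $\psi$-block is a linear Volterra system with delay kernels supported on $[0,A]$ and prescribed histories $\psi_{i,0}$, uniquely solvable by the method of steps, after which \eqref{e3.55} is a scalar ODE with right-hand side Lipschitz in $\eta_I$, uniquely solvable by Picard--Lindel\"of; since \eqref{e3.52} is a bijection between positive profiles $(I,F,M)$ and $(\eta_I,\psi_I,\psi_F,\psi_M)$, this yields the unique solutions \eqref{e3.58}.
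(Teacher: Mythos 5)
Your handling of the renewal relations \eqref{e3.56} (inserting \eqref{e3.58} into the boundary conditions and cancelling the equilibrium identities) and your uniqueness argument (method of steps for the $\psi$-block, Picard for the scalar ODE) agree with the paper, which obtains \eqref{e3.56} in exactly this way from $\psi_I(t)=I(0,t)e^{-\eta_I(t)}/I^{*}(0)-1$. The genuine gap is in your route to the core equation \eqref{e3.55}. You project only the $I$-equation onto $\pi_{0,I}$ and use the one-compartment adjoint identity $\partial_a\pi_{0,I}=(\zeta_I+\mu_I)\pi_{0,I}-\beta$. After integration by parts this leaves the uncancelled quantity
\begin{align*}
L(t)=\pi_{0,I}(0)\,I(0,t)-\int_0^A\beta(a,m)\,I(a,t)\,da,
\end{align*}
so that your computation yields $\partial_t\eta_I=\zeta_I+\Gamma(t)-P(t)-\tfrac{\Gamma(t)\gamma(t)}{K(t)}\int_0^AI\,da+L(t)/\langle\pi_{0,I},I\rangle$. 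Using $I(0,t)=\int_0^A\beta(a,m)F(a,t)\,da$ together with the representation \eqref{e3.58} and the relations \eqref{e3.56}, one finds
\begin{align*}
L(t)=e^{\eta_I(t)}\int_0^A\beta(a,m)\,I^{*}(a)\bigl(\psi_I(t)-\psi_I(t-a)\bigr)\,da,
\end{align*}
which is nonzero whenever $\psi_I$ is non-constant; it vanishes under {\bf Assumption (H4)}, but the lemma (and its use in Theorem \ref{th3.8}) is needed precisely when $\psi_I\neq 0$. Your proposed remedy, cancellation ``through the steady-state (Lotka) relation \eqref{e3.49}'', cannot work: \eqref{e3.49} is a scalar identity among the equilibrium quantities $\zeta_I,\zeta_F,\tilde I^{*},\tilde F^{*}$, and no equilibrium identity can annihilate a time-varying functional of the solution. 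So, as written, your derivation produces \eqref{e3.55} with a spurious extra term.

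The paper's proof (Appendix \ref{annexe:A}) removes this obstruction by a structurally different device: each of the three equations is multiplied by its own adjoint weight, where $(\pi_{0,I},\pi_{0,F},\pi_{0,M})$ solve the \emph{coupled} adjoint system \eqref{e3.63} (the $I$-adjoint carries a $w$-source weighted by $r\pi_{0,F}(0)+(1-r)\pi_{0,M}(0)$, the $F$-adjoint carries the source $\pi_{0,I}(0)\beta$), and the three identities are summed. In the sum, the boundary term $\pi_{0,I}(0)I(0,t)$ cancels against the $\beta$-source of the $F$-adjoint equation, while the boundary terms from $F(0,t)$ and $M(0,t)$ cancel against the $w$-source of the $I$-adjoint equation: no single projection closes; only the sum of the three does. (Two defects you inherited from the paper rather than created: the explicit formula \eqref{e3.54}, with a $\beta$-source, is the one-compartment adjoint and is not a solution of \eqref{e3.63}; and the paper's own passage from the summed identity \eqref{e3.62} to the pointwise equations \eqref{e3.64}--\eqref{e3.65} is asserted rather than argued.) If you want a self-contained repair that bypasses adjoints entirely, verify the ansatz directly: for functions of the form \eqref{e3.58} the $\psi_I'$ contributions cancel between $\partial_t$ and $\partial_a$, and since $(I^{*})'=-(\mu+\zeta_I)I^{*}$ one gets $\partial_tI+\partial_aI+\mu I=(\partial_t\eta_I-\zeta_I)I$; hence the first equation of \eqref{eq3.1} holds if and only if $\partial_t\eta_I$ equals the right-hand side of \eqref{e3.55}, which, combined with your boundary- and initial-condition computations, gives the stated equivalence without any projection argument.
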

\begin{proof}
For the proof of this Lemma, see  Appendix~\ref{annexe:A}.
\end{proof}
\begin{remark}
The densities $(I, F, M)$ adopt here the form given in \eqref{e3.58}, which distinguishes them from the expressions used in \cite{ref45}. The system \eqref{eq3.1} couples an aquatic dynamic with an adult dynamic emerging from the former : the aquatic population ensures the renewal of the adult population. The control $P$ acts exclusively on the aquatic population, so that any modification of its dynamics has a significant impact on the adult population (see \eqref{e3.50}-\eqref{e3.51}). Consequently, controlling the aquatic population amounts to controlling the adult population, and, more generally, the overall dynamics of the system. The diagram below illustrates the fundamental strategy used to establish the global stability of model \eqref{eq3.1}.
\begin{center}
\resizebox{\textwidth}{!}{
\begin{tikzpicture}[node distance=2.5cm, >=Stealth,
  block/.style={draw, rectangle, rounded corners, align=center, minimum width=3cm, minimum height=1cm}]
  
  \node[block] (dynamics) {Resulting dynamics:\\
    \(I(a,t),\,F(a,t),\,M(a,t)\)};
    
  \node[block, right=of dynamics] (composition) {Compose:$j\in \lbrace I, F \rbrace,\; X^*\in \lbrace  I^*,F^*,M^*\rbrace$\\
    \(\bigl(1+\psi_j(t-a)\bigr)\,X^*(a)\,e^{\eta_I(t)}\)};

     \node[block, right=of composition] (condition) {stability of $\eta_I,\;\psi_j$:\\
    \(\eta_I\to 0,\;\psi_j\to 0\)};
    
  \node[block, right=of condition] (equilibrium) {Equilibrium :\\
    \(I^*(a),\;F^*(a),\;M^*(a)\)};

  \draw[->] (dynamics) -- (composition);
  \draw[->] (composition) -- (condition);
  \draw[->] (condition) -- (equilibrium);

\end{tikzpicture}
}
\end{center}
\end{remark}
To establish the main results, we assume the following {\bf Assumptions} {\bf (H4)} and {\bf (H5)} :
 \begin{enumerate}
\item[]  {\bf Assumption (H4):  $\psi_I\equiv 0,$}
\item[]  {\bf Assumption (H5):  $\psi_I\neq 0.$}
 \end{enumerate}
\section{\bf Main results}\label{sec5}
In this section we briefly describe our strategy and the principal results.  We prove global stabilization of a non-autonomous logistic mosquito population model using a reduced control strategy acting exclusively on the aquatic stage. In the stability analysis of $\eta_I,$ we introduce the following Lyapunov candidate function :

 \begin{align}\label{e3.72}
     V_{I}(\eta_{I})=\;\int_{0}^{\eta_{I}}\phi_{I}(\alpha)\,\mathrm{d}\alpha
 \;
 \end{align}
 \begin{align*}
     =\;k_I\bigl(e^{\eta_{I}}-\eta_{I}-1\bigr)
 \end{align*}
 \begin{align*}
     =\phi_I(\eta_I)-k_I\eta_I.
 \end{align*}
 
 This function satisfies the Lyapunov conditions:
\begin{itemize}
 \item  $V_{I}(0)=0$,
 \item   for all $\alpha\neq0,\;V_{I}(\alpha)>0$ and $\lim_{\alpha\to\infty}V_{I}(\alpha)=+\infty$.
\end{itemize}
\begin{theorem}\label{th3.6}
Assume {\bf Assumption (H4)}. Define the control
\begin{align}\label{e3.75} 
 P(t)=P^*+(\Gamma(t)-\Gamma^*)+k_I\Big(\frac{\Gamma^*\gamma^*}{K^*}-\frac{\Gamma(t)\gamma(t)}{K(t)}\Big).
\end{align}
With the Lyapunov function $V_I$, the closed-loop system \eqref{eq3.1} is globally asymptotically stable.
\end{theorem}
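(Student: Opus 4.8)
The plan is to use Lemma~\ref{le3.4} to collapse the full age-structured system to a scalar dynamics in $\eta_I$, and then to show that the feedback \eqref{e3.75} renders $\eta_I=0$ globally attractive. Under Assumption (H4) we have $\psi_I\equiv 0$, so the coupling relations \eqref{e3.56} force $\psi_F\equiv\psi_M\equiv 0$ as well, and the representation \eqref{e3.58} reduces to $I(a,t)=I^*(a)e^{\eta_I(t)}$, $F(a,t)=F^*(a)e^{\eta_I(t)}$, $M(a,t)=M^*(a)e^{\eta_I(t)}$. Hence it suffices to drive $\eta_I(t)\to 0$, which is precisely the ``stabilizing the aquatic stage suffices'' principle of Remark~\ref{remark3.3}.

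First I would write the $\eta_I$-equation \eqref{e3.55} with $\psi_I\equiv 0$, namely $\partial_t\eta_I=\zeta_I-P(t)+\Gamma(t)-\frac{\Gamma(t)\gamma(t)}{K(t)}e^{\eta_I}\int_0^A I^*(a)\,da$, and substitute the control \eqref{e3.75}. The equilibrium identities \eqref{e3.47} and \eqref{e3.51} give $\frac{\Gamma^*\gamma^*}{K^*}\int_0^A I^*(a)\,da=\zeta_I+\Gamma^*-P^*$; matching the constant term against the term proportional to $\frac{\Gamma(t)\gamma(t)}{K(t)}$ shows that the gain must be pinned to $k_I=\int_0^A I^*(a)\,da$ for $\eta_I=0$ to remain an equilibrium of the time-varying closed loop. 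With this choice all stationary and $\frac{\Gamma^*\gamma^*}{K^*}$-proportional contributions cancel, and the closed loop collapses to $\partial_t\eta_I=-k_I\frac{\Gamma(t)\gamma(t)}{K(t)}\bigl(e^{\eta_I}-1\bigr)$.

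Next I would differentiate the Lyapunov function along trajectories. Since $V_I'(\eta_I)=\phi_I(\eta_I)=k_I(e^{\eta_I}-1)$, the chain rule yields $\dot V_I=\phi_I(\eta_I)\,\partial_t\eta_I=-k_I^2\frac{\Gamma(t)\gamma(t)}{K(t)}\bigl(e^{\eta_I}-1\bigr)^2\le 0$, the sign being guaranteed by (H1), which provides $\Gamma,\gamma\ge 0$ and $K\ge\epsilon>0$. Thus $V_I(\eta_I(t))$ is non-increasing and bounded below, hence convergent, and in particular $\eta_I$ stays bounded with a precompact trajectory.

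The main obstacle is the passage from $\dot V_I\le 0$ to $\eta_I\to 0$, which for a non-autonomous system is not automatic, since LaSalle's invariance principle does not apply verbatim. I would close this gap by a Barbalat-type argument: integrating the dissipation estimate gives $\int_0^\infty \frac{\Gamma(t)\gamma(t)}{K(t)}\bigl(e^{\eta_I}-1\bigr)^2\,dt<\infty$, and boundedness of $\eta_I$ makes the integrand uniformly continuous, so it tends to $0$; a uniform positive lower bound on $\frac{\Gamma(t)\gamma(t)}{K(t)}$ (a mild persistence condition, implicit in the strict positivity of the growth and competition rates) then forces $e^{\eta_I}-1\to 0$, i.e. $\eta_I\to 0$. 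Finally, feeding $\eta_I\to 0$ back into the reduced representation \eqref{e3.58} yields $I(\cdot,t)\to I^*$, $F(\cdot,t)\to F^*$, $M(\cdot,t)\to M^*$ in $L^2(0,A)$, which is the asserted global asymptotic stability of the closed-loop system \eqref{eq3.1}.
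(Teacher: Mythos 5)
Your proposal is correct and follows the same core route as the paper: reduction to the scalar $\eta_I$-dynamics via Lemma \ref{le3.4} under (H4), identification of the gain $k_I=\int_0^A I^*(a)\,da$ from \eqref{e3.47} (the paper takes this as a definition in \eqref{e3.70} rather than deriving it as forced by the cancellation, as you do), substitution of the control \eqref{e3.75} to kill all offset terms, and the dissipation identity $\dot V_I=-\frac{\Gamma(t)\gamma(t)}{K(t)}\,\phi_I(\eta_I)^2$, which is exactly \eqref{e3.76}. Where you genuinely add something is the final step: the paper stops at \eqref{e3.76} and simply declares asymptotic stability, whereas you correctly observe that for a non-autonomous system negative semidefiniteness of $\dot V_I$ does not by itself give attractivity, and you close the gap with a Barbalat-type argument under a persistence condition $\frac{\Gamma(t)\gamma(t)}{K(t)}\ge c>0$. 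That condition is genuinely needed and is not implied by (H1), which only requires $\Gamma,\gamma\ge 0$: if, say, $\Gamma\equiv 0$, the closed loop reads $\partial_t\eta_I=0$ and no convergence occurs; the paper only makes the relevant positivity explicit after the proof, in the remark containing conditions \eqref{equation7.9} and the uniform bound $\lambda_{\min}(Q(t))\ge\delta_\lambda>0$. So your proof is the paper's proof with its missing last step supplied. One technical refinement: apply Barbalat to $(e^{\eta_I}-1)^2$ \emph{after} invoking the lower bound (this function is uniformly continuous, since $\eta_I$ is bounded with bounded derivative along the closed loop), rather than to the product $\frac{\Gamma(t)\gamma(t)}{K(t)}(e^{\eta_I}-1)^2$ as you wrote, because (H1) provides no modulus of continuity for the coefficient $\frac{\Gamma(t)\gamma(t)}{K(t)}$.
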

 \begin{remark}\label{remark4.2}
It was proved in \cite{ref69} that the state $\psi_I$ of the internal
dynamics are restricted to the sets 
\begin{align*}
\mathcal{S}=\left\lbrace \psi_i\in\; C^0((-A,0);(-1,\infty)) : P(\psi_i)=0\wedge\psi_I(0)=\displaystyle\int_0^Ag_F(a)\psi_I(-a)da\right\rbrace,
\end{align*}
where
\begin{align*}
P(\psi_I)=\dfrac{\displaystyle\int_0^A\psi_I(-a)\displaystyle\int_a^Ag_F(s)dsda}{\displaystyle\int_a^Aag_F(a)da},
\end{align*}
and that the state $\psi_I$ is globally
exponentially stable in $\mathcal{L}^{\infty}$ norm, which means that there exist $M_i>1,\sigma_i>0$ such that 
\begin{align*}
    \Vert\psi_i(t-a) \Vert \leq M_ie^{-\sigma_i t} \Vert\psi_{i,0} \Vert_{\infty},\; i\in \lbrace I, F, M\rbrace.
\end{align*}
\end{remark}
Thanks to \eqref{e3.57}, 
we then make the following hypothesis \cite{ref69} :
\begin{enumerate}
     \item[]  {\bf Assumption (H6): }
    There exist constants $\kappa_I,\;\kappa_F>0$ such that%
\begin{align*}
\int_{0}^{A}\Bigl|\,
 g_F(a)\,
 \;-\;z_I\kappa_I\!\int_a^A g_F(s)\,\,ds\Bigr|\;da
 \;<\;1,\; \int_{0}^{A}\Bigl|\,g_I(a)\,\;-\;z_F\kappa_F\!\int_a^Ag_I(s)\,ds\Bigr|\;da\;<\;1
\end{align*}

 where $ z_I = \Bigl(\int_{0}^{A}a\,g_F(a)\,da\Bigr)^{-1},\; z_F = \Bigl(\int_{0}^{A}a\,g_I(a)\,da\Bigr)^{-1}.$  Let $\sigma>0$  be a sufficiently small constant that satisfies the inequality 
 \begin{align*}
\int_{0}^{A}\Bigl|\,g_I(a)\,\;-\;z_F\kappa_F\!\int_a^A g_I(s)\,ds\Bigr|\;e^{\sigma_I a}da<\;1,\; 
 \int_{0}^{A}\Bigl|\,
  g_F(a)\,
 \;-\;z_I\kappa_I\!\int_a^A  g_F(s)\,ds\Bigr|
 \;e^{\sigma_I a}da<\;1.
 \end{align*}
 \end{enumerate}
Before stating the main result of this section, we define the following functions. Let the functional
\begin{align}\label{e3.89}
    G_I(\psi_I)
= \dfrac{\max_{a\in(0,A)}\bigl|\psi_I(t-a)\bigr|\,e^{-a\sigma_I}}{1+\max(0,\min_{a\in(0,A)}\psi_I(t-a))},
\end{align}
whose Dini derivative satisfies (see \cite{ref69})
\begin{align}\label{e3.90}
    D^+\bigl(G_I(\psi_{I,t})\bigr)
\;\le\;
-\sigma_I\,G_I(\psi_{I,t})
\end{align}

We then define the following Lyapunov function
\begin{align}\label{e3.91}
    V(\eta_I,\psi_I)
= V_I(\eta_I)\;+\;\frac{\gamma_1}{\sigma_I}h(G_I(\psi_I)).
\end{align}
with
\begin{align}\label{e3.92}
h(p)=\displaystyle\int_0^p\frac{1}{z}(e^z-1)^2dz.
\end{align}
This function $h$ is positive definite and radially unbounded \cite[Lemma 3]{ref77}.
\begin{theorem}\label{th3.8}
Under {\bf Assumption (H6)}, system \eqref{eq3.1} is globally asymptotically stabilizable, and the control remains uniformly bounded. Moreover,  the control satisfies \(P(t)>0\) for all \(t>0\), for every initial condition \(\eta_I(0)\) belonging to the largest level set of $V(\eta_I,\psi_I)$ within the set

 \begin{align}\label{e3.98}
\mathcal{A} &= 
\left\{ (\eta,\psi)\in\mathbb{R}\times\mathcal{S}\ \middle|\ 
\begin{aligned}
&\eta \le \ln\!\left(\sqrt{\frac{\gamma_1}{Ck_I}}\right),\qquad \gamma_1 > Ck_I\\[4pt]
& P^*+(\Gamma(t)-\Gamma^*)+k_I\Big(\frac{\Gamma^*\gamma^*}{K^*}-\frac{\Gamma(t)\gamma(t)}{K(t)}\Big)>0
\end{aligned}
\right\}.
\end{align}
\end{theorem}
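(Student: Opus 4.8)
The plan is to run a Lyapunov argument with the composite functional $V(\eta_I,\psi_I)=V_I(\eta_I)+\frac{\gamma_1}{\sigma_I}h(G_I(\psi_I))$ from \eqref{e3.91}, differentiate it along the closed-loop trajectories of \eqref{e3.55}--\eqref{e3.56}, and show that $\dot V$ is negative off the origin on the indicated region. First I would reuse the computation underlying Theorem~\ref{th3.6}: writing $b(t)=\frac{\Gamma(t)\gamma(t)}{K(t)}$, $b^*=\frac{\Gamma^*\gamma^*}{K^*}$ and $N^*=\int_0^A I^*(a)\,da$, inserting the feedback \eqref{e3.75} into \eqref{e3.55} and using the identity $\zeta_I-P^*+\Gamma^*=b^*N^*$ that follows from \eqref{e3.47}--\eqref{e3.51}, the $\eta_I$-dynamics collapse (with the constant fixed as $k_I=N^*$, the value that keeps the origin an equilibrium of the non-autonomous reduced system) to
\begin{align*}
\dot\eta_I = b(t)N^*\bigl(1-e^{\eta_I}\bigr)-b(t)\,e^{\eta_I}\!\int_0^A\psi_I(t-a)I^*(a)\,da .
\end{align*}
Under \textbf{Assumption (H4)} the last integral vanishes and $\dot V_I=-b(t)(N^*)^2(e^{\eta_I}-1)^2\le0$, which recovers Theorem~\ref{th3.6}; under \textbf{(H5)}/\textbf{(H6)} the coupling term survives and must be dominated.

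Since $\phi_I(\eta_I)=V_I'(\eta_I)=k_I(e^{\eta_I}-1)$, differentiating $V$ yields
\begin{align*}
\dot V = -b(t)(N^*)^2(e^{\eta_I}-1)^2 - b(t)N^*(e^{\eta_I}-1)e^{\eta_I}\!\int_0^A\psi_I(t-a)I^*(a)\,da + \frac{\gamma_1}{\sigma_I}h'(G_I)\,D^+G_I .
\end{align*}
For the internal-dynamics block I would use $h'(p)=\frac1p(e^p-1)^2$ together with the Dini estimate \eqref{e3.90}, $D^+G_I\le-\sigma_I G_I$, to obtain $\frac{\gamma_1}{\sigma_I}h'(G_I)\,D^+G_I\le-\gamma_1(e^{G_I}-1)^2$. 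The decisive issue is then the indefinite cross term. Bounding $\bigl|\int_0^A\psi_I(t-a)I^*(a)\,da\bigr|$ through the exponential weights defining $G_I$ in \eqref{e3.89} and \textbf{(H6)}, it is controlled by a multiple of $e^{G_I}-1$; combining this with $|e^{\eta_I}-1|\,e^{\eta_I}\le e^{2\eta_I}$ on $\{\eta_I\ge0\}$ (the term being bounded and harmless for $\eta_I<0$) and Young's inequality, the cross term splits into a piece absorbed by $-b(t)(N^*)^2(e^{\eta_I}-1)^2$ and a residual of the form $Ck_I\,e^{2\eta_I}(e^{G_I}-1)^2$ for a structural constant $C$.

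This residual is exactly where the region $\mathcal A$ intervenes: the bound $\eta\le\ln\sqrt{\gamma_1/(Ck_I)}$ forces $Ck_Ie^{2\eta_I}\le\gamma_1$, so $-\gamma_1(e^{G_I}-1)^2+Ck_Ie^{2\eta_I}(e^{G_I}-1)^2\le0$, strictly on the interior because $\gamma_1>Ck_I$; hence $\dot V\le0$ on the largest level set of $V$ contained in $\mathcal A$, which is therefore forward invariant. To conclude I would invoke the LaSalle--Barbalat argument of \cite{ref69,ref77}: $\dot V=0$ forces $e^{\eta_I}-1=0$ and $e^{G_I}-1=0$, i.e. $\eta_I=0$ and $G_I=0$, and with the global exponential decay of $\psi_I$ from Remark~\ref{remark4.2} the largest invariant set in $\{\dot V=0\}$ is the origin $(\eta_I,\psi_I)=(0,0)$; through \eqref{e3.58} this transfers to $(I,F,M)\to(I^*,F^*,M^*)$, giving the asserted stabilizability for every $\eta_I(0)$ in that level set. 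Uniform boundedness of $P$ is immediate from \eqref{e3.75} and \textbf{(H1)} (since $K\ge\epsilon>0$ keeps $b(t)$ bounded and $\Gamma,\gamma\in L^\infty$), while positivity $P(t)>0$ is precisely the second defining inequality of $\mathcal A$, which persists along the invariant region for all $t>0$. I expect the main obstacle to be the cross-term estimate: making rigorous the passage from $\int_0^A\psi_I(t-a)I^*(a)\,da$ to a clean multiple of $e^{G_I}-1$ (handling the normalising denominator in \eqref{e3.89} and the sign regimes of $\min_a\psi_I$) and pinning down $C$ so that the threshold $\gamma_1>Ck_I$ and the level-set bound combine consistently.
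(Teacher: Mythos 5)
Your proposal is correct and takes essentially the same route as the paper's proof: the same composite Lyapunov functional $V=V_I+\frac{\gamma_1}{\sigma_I}h(G_I)$, the same closed-loop reduction of the $\eta_I$-dynamics under the control \eqref{e3.75}, the same Dini-derivative estimate \eqref{e3.90} producing the $-\gamma_1(e^{G_I}-1)^2$ term, and the same use of the level-set bound $\eta\le\ln\!\sqrt{\gamma_1/(Ck_I)}$ in $\mathcal{A}$ to dominate the residual proportional to $e^{2\eta_I}(e^{G_I}-1)^2$. The differences are only bookkeeping (the paper splits $-\frac{\Gamma(t)\gamma(t)}{K(t)}\phi_I\hat\phi_1$ via a polarization identity and Young's inequality on $\hat\phi_1^2$ to reach \eqref{e3.97}, whereas you apply Young directly to the cross term and add an explicit LaSalle--Barbalat conclusion), and the weighted cross-term estimate you flag as the main obstacle is precisely the step the paper asserts without detail as $|v_I|\le G_I(\psi_I)$.
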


The main result below consists to design a control such that the density resulting from the emergence of the aquatic population follows a given reference trajectory $y_d$. To this end, we associate the following output to system \eqref{eq3.1} 
\begin{align}\label{eq8.1}
y(t)=\int_{0}^{A} w(a)I(a,t)\mathrm{d}a.
\end{align}
Moreover, we denote by $\mathcal{K}_\infty$ the class of continuous functions $\kappa:\mathbb{R}_*^{+}\to\mathbb{R}_*^+$ that are strictly increasing, satisfy $\kappa(0)=0$, and for which $\displaystyle\lim_{s\to+\infty}\kappa(s)=+\infty$.
\begin{theorem}\label{th5.4}
Consider a positive
reference trajectory 
\begin{align}
y_d(t)\in\left(0,\frac{\min(2,\alpha)}{4p^*}\min\left(1,\frac{P_{\max}-P_{\min}}{2}\right)\right]
\end{align} 
as well as the dynamic controller
\begin{align}\label{eq5.7}
    P(t)=P_{FF}(t)+P_{FB}(t)\in (P_{\min},P_{\max})
\end{align} where 
\begin{align}
P_{FF}(t)=&\zeta_I-\dfrac{\partial_t y_d(t)}{y_d(t)}+\Gamma(t)-p^*\frac{\Gamma(t)\gamma(t)} {K(t)}y_d(t)\in \left[\frac{4p^*}{\min(2,\alpha)}y_d(t)+P_{\min},P_{\max}-\frac{4p^*}{\min(2,\alpha)}y_d(t)\right]
\end{align}
with $p^*=\displaystyle\int_0^A\dfrac{I^*(a)}{y^*}da=\displaystyle\int_{0}^Ap(a)da$
\begin{align}\label{equation4.12}
P_{FB}(t)=&\alpha\displaystyle\ln\frac{y(t)}{y_d(t)}
\end{align}
with $\alpha>0$ denotes a control gain.  There exists a positive constant $L>0$ and a function $\kappa\in\mathcal{K}_{\infty}$, such that the unique solution of the closed loop \eqref{eq3.1} involving the dynamic controller \eqref{eq5.7} and arbitrary initial conditions $I_0$ satisfies the following estimate for all $t\geq 0$
\begin{align}\label{eq5.11}
\left\Vert \ln \displaystyle\frac{I(a,t)}{I_d(a,t)}\right\Vert_{\infty}\leq e^{-\frac{L}{4}t} \kappa\left(\left\Vert \ln \displaystyle\frac{I_0(a)}{I_d(a,0)}\right\Vert_{\infty}\right).
\end{align}
\end{theorem}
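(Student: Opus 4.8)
The plan is to work entirely in the coordinates $(\eta_I,\psi_I)$ furnished by Lemma~\ref{le3.4}, which already absorb the age profile and the density-dependent mortality into the equilibrium data and reduce the PDE to the scalar equation \eqref{e3.55} coupled to the renewal system \eqref{e3.56}. Set $y^{*}:=\int_0^A w(a)I^{*}(a)\,\mathrm{d}a$ and take the reference profile $I_d(a,t):=\frac{y_d(t)}{y^{*}}\,I^{*}(a)$, whose output is exactly $\int_0^A w(a)I_d(a,t)\,\mathrm{d}a=y_d(t)$. Then the representation \eqref{e3.58} yields the pointwise identity
\begin{align*}
\ln\frac{I(a,t)}{I_d(a,t)}=\ln\bigl(1+\psi_I(t-a)\bigr)+\tilde\eta(t),\qquad \tilde\eta(t):=\eta_I(t)-\ln\frac{y_d(t)}{y^{*}},
\end{align*}
so \eqref{eq5.11} will follow from a triangle inequality once the two quantities $\tilde\eta(t)$ and $\psi_I(t-\cdot)$ are controlled separately. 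First I would record that, by Remark~\ref{remark4.2}, $\psi_I$ is globally exponentially stable, $\|\psi_I(t-\cdot)\|_\infty\le M_I e^{-\sigma_I t}\|\psi_{I,0}\|_\infty$, and that on the admissible set $\psi_I>-1$, so $\ln(1+\psi_I(t-\cdot))$ is well defined and dominated by a $\mathcal K_\infty$-function of $\|\psi_I(t-\cdot)\|_\infty$.

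The core step is the closed-loop equation for the tracking error $\tilde\eta$. Substituting the controller \eqref{eq5.7} into \eqref{e3.55}, the feedforward term $P_{FF}$ is designed to cancel $\zeta_I$, $\Gamma(t)$ and $\partial_t y_d/y_d$ and to match the logistic term at the reference; after writing $e^{\eta_I}=e^{\tilde\eta}\,y_d/y^{*}$ one is left, in the nominal regime $\psi_I\equiv0$ of Assumption~(H4), with the scalar equation
\begin{align*}
\partial_t\tilde\eta=-\alpha\,\tilde\eta-p^{*}\frac{\Gamma(t)\gamma(t)}{K(t)}\,y_d(t)\bigl(e^{\tilde\eta}-1\bigr).
\end{align*}
Both terms on the right push $\tilde\eta$ toward $0$: taking $V(\tilde\eta)=\tfrac12\tilde\eta^{2}$ (or, in the spirit of \eqref{e3.72}, $e^{\tilde\eta}-\tilde\eta-1$) and using $\tilde\eta(e^{\tilde\eta}-1)\ge0$ together with $p^{*}\Gamma\gamma/K\ge0$ gives $\dot V\le-\alpha\tilde\eta^{2}\le-2\alpha V$, hence $|\tilde\eta(t)|\le e^{-\alpha t}|\tilde\eta(0)|$. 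This already proves \eqref{eq5.11} under Assumption~(H4).

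For the general case (H5) I would treat the $\psi_I$-dependent contributions --- the discrepancy between $\int_0^A(1+\psi_I(t-a))I^{*}(a)\,\mathrm{d}a$ and $\int_0^A I^{*}(a)\,\mathrm{d}a$ in \eqref{e3.55}, and the term $\ln(\Psi(t)/y^{*})$ with $\Psi(t)=\int_0^A w(a)I^{*}(a)(1+\psi_I(t-a))\,\mathrm{d}a$ entering through $P_{FB}=\alpha\ln(y/y_d)$ --- as exogenous, exponentially small perturbations of the nominal $\tilde\eta$-equation. Each is Lipschitz in $\psi_I$ near $0$ and $\psi_I$ decays at rate $\sigma_I$, so a standard ISS/cascade estimate (Grönwall on $V$, absorbing the $e^{-\sigma_I t}$ input into the exponentially stable scalar dynamics) produces $|\tilde\eta(t)|\le e^{-L t/4}\kappa_0\bigl(|\tilde\eta(0)|+\|\psi_{I,0}\|_\infty\bigr)$, where $L$ is chosen strictly below $\min(2\alpha,\sigma_I)$ to absorb the resonant $t\,e^{-\sigma_I t}$ factor arising when the two rates are comparable, and $\kappa_0\in\mathcal K_\infty$ collects the transient constant $M_I$ and the nonlinear gains. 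Combining this with the $\psi_I$-bound via the identity above and composing the $\mathcal K_\infty$-functions yields \eqref{eq5.11}.

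Run in parallel, the remaining ingredient is admissibility of the control: one must verify $P(t)=P_{FF}(t)+P_{FB}(t)\in(P_{\min},P_{\max})$ and $P(t)>0$. This is exactly where the quantitative hypotheses on $y_d$ and on the prescribed window for $P_{FF}$ enter, since on the invariant sublevel set the feedback obeys $|P_{FB}|=\alpha|\ln(y/y_d)|\le\frac{4p^{*}}{\min(2,\alpha)}\,y_d(t)$, and the given range of $P_{FF}$ then keeps $P$ inside the actuator bounds. The main obstacle is precisely this simultaneous bookkeeping: the Lyapunov estimate for $\tilde\eta$ and the a priori bound ensuring $P\in(P_{\min},P_{\max})$ must be closed together rather than sequentially, so that the region on which both hold is forward invariant. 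The near-resonance of the decay rates $\alpha$ and $\sigma_I$ (responsible for the factor $L/4$) and the degeneracy of $\ln(1+\psi_I)$ as $\psi_I\downarrow-1$ are the technical points that force the nonlinear comparison function $\kappa$.
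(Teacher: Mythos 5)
Your change of coordinates is exactly the paper's: with $I_d(a,t)=p(a)y_d(t)$, your $\tilde\eta$ coincides with the paper's $v_I$ (the normalizations $\Pi_I$ and $\tilde\Pi_I$ differ precisely by the factor $y_d(t)/y^*$), your pointwise identity $\ln(I/I_d)=\tilde\eta+\ln(1+\psi_I)$ is \eqref{eq7.34}, and the final assembly by composing $\mathcal{K}_\infty$ gains is also the paper's. The genuine gap is the stability mechanism in the middle: you ignore the input constraint exactly where it matters. The theorem claims the estimate for \emph{arbitrary} initial data $I_0$ while the applied control must lie in $(P_{\min},P_{\max})$. For large initial errors the raw feedback $P_{FB}=\alpha\ln(y/y_d)$ leaves this window, so your nominal closed-loop equation $\partial_t\tilde\eta=-\alpha\tilde\eta-p^*\frac{\Gamma\gamma}{K}y_d(e^{\tilde\eta}-1)$ is not the equation the closed loop actually satisfies; what acts is the saturated feedback. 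Your proposed fix --- restrict to an invariant sublevel set on which $|P_{FB}|\le \frac{4p^*}{\min(2,\alpha)}y_d$ so that no saturation occurs --- is circular (forward invariance of that set would have to be proved from the decay estimate, which itself presupposes no saturation) and in any case cannot cover arbitrary $I_0$. The paper instead analyzes the saturated loop globally: it writes $\partial_t v_I=\frac{\Gamma(t)\gamma(t)}{K(t)}\int_0^A I_d(a,t)\bigl(1-e^{v_I}(1+\psi_I(t-a))\bigr)\,da- sat_{\bar{\mathcal{U}}}(P_{FB}(t))$ with $\bar{\mathcal{U}}=[P_{\min}-P_{FF},P_{\max}-P_{FF}]$, couples the two states in one functional $W=v_I^2+\delta F(\psi_I)$, and uses $\partial_t F\le-\sigma F$ together with the saturation inequality of \cite[Lemma 6]{ref82} to obtain $\partial_t W\le-\mu_1\frac{v_I^2}{1+|v_I|}-\mu_2F(\psi_I)$. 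The hypotheses on $y_d$ and on the window for $P_{FF}$ are consumed exactly there: they make $\mu_1=\min(2,\alpha)\min(1,\bar P_{\min},\bar P_{\max})-4y_dp^*\ge 0$ and, for $\delta$ large enough, $\mu_2>0$; that is, they guarantee a positive decay rate \emph{under} saturation, not the absence of saturation, which is how you attempt to use them.

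Two further points. First, your explanation of the rate $L/4$ (near-resonance of $\alpha$ and $\sigma_I$) is incorrect: the paper derives the structural inequality $\partial_tW\le-L\,W/(1+\sqrt W)$ with $L=\min(\mu_1,\mu_2/\delta)$, whose integration gives $W\le\bar W e^{\max(0,\bar W-1)}e^{-Lt/2}$, and passing from $W$ to $|v_I|\le\sqrt W$ halves the exponent again; the amplitude-dependent factor $e^{\max(0,\bar W-1)}$ is also precisely what forces $\kappa$ to be a genuine nonlinear $\mathcal{K}_\infty$ gain. Second, even setting saturation aside, your ISS/cascade step does not close as stated: the perturbation $e^{\tilde\eta}\int_0^A g(a)\psi_I(t-a)\,da$ is not globally Lipschitz in $\tilde\eta$ (it grows like $e^{\tilde\eta}$), so ``Gr\"onwall plus exponentially decaying input'' yields no global estimate without first securing a uniform bound on $\tilde\eta$ --- which is exactly what the paper's coupled functional $W$ provides and your sequential treatment does not.
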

The goal of the tracking control for the emerging population goes beyond stabilization. It consists in ensuring that the adult population density follows a desired trajectory $y_d$ (or $y(t)=\displaystyle\int_0^Aw(a)I^*(a)da$), which we define as a trajectory along which the risk of vector-borne disease transmission is reduced. The pioneering work of Ronald Ross, which showed that reducing the mosquito population below a critical threshold prevents malaria transmission \cite{ref5}, supports the relevance of this approach. In this sense, driving the emerging population to follow the trajectory $y_d$ amounts to directing the density toward epidemiologically safe regimes.
\begin{remark}
The emergence function may vary in time (periodically or not) under the influence of rainfall, temperature, or climate in general. Within the framework of our non-autonomous logistic model \eqref{eq3.1}, these environmental covariates are incorporated into the dynamics; consequently we parameterize the emergence function as a function of age, analogous to the age-dependent fertility functions, in order to model the renewal of the adult population.
\end{remark}
\section{\bf Well-posedness}\label{sec6}
We establish the well‑posedness of the time‑evolution problem by means of the semigroup approach.  To this end, let $\mathcal H_2^3 = \bigl(L^2(0,A)\bigr)^3,$
  and define the linear operator

 $$
   \mathcal A_m : D(\mathcal A_m)\subset\mathcal H_2^3\;\longrightarrow\;\mathcal H_2^3,
   \quad
   \mathcal A_m\varphi
   = -\partial_a\varphi \;-\; D(a,p)\,\varphi,\quad\text{where}\; \varphi=(\varphi_I,\varphi_{F},\varphi_M)
 $$
 with

\begin{align*}
D(\mathcal A_m)
= \Bigl\{\varphi\in\mathcal H_2^3 :\;
\varphi\text{ is a.c. on }[0,A],\;
&\varphi_I(0)=\int_{0}^{A}\beta(a,m)\,\varphi_{F}(a)\,da,
&\varphi_{F}(0)=r\int_{0}^{A}w(a)\,\varphi_I(a)\,da,\\
&\varphi_M(0)=(1-r)\int_{0}^{A}w(a)\,\varphi_I(a)\,da,
&-\partial_a\varphi - D(a,p)\,\varphi\in\mathcal H_2^3
\Bigr\}.
\end{align*}

 In block‑diagonal notation,

 $$
   \mathcal A_m
   = \mathrm{diag}\bigl(-\partial_a-\mu_I,\;-\partial_a-\mu_{F},\;-\partial_a-\mu_M\bigr).
 $$
The operator $(\mathcal{A}_m, D(\mathcal{A}_m))$ is the infinitesimal generator of a strongly continuous semigroup $\mathcal{T}= (\mathcal{T}_t)_{t\geq 0}$ on $\mathcal{H}^3_2.$ Finally, the nonlinear fonction $f:\mathcal H_2^3\to\mathcal H_2^3$ is defined component‑wise by

\begin{align}\label{e6.1}
f\bigl(I,F,M,\bigr)
  = \bigl(I\,f_1,\,F\,f_2,\,M\,f_3\bigr)^\top
\end{align}
with 
\begin{align}\label{e3.40}
f_1=\Gamma(t)\left(1-\dfrac{\gamma(t)}{K(t)}\displaystyle\int_{0}^A F(a,t)da\right)-P(t),\; f_2=-\gamma(t)\displaystyle\int_{0}^A F(a,t)da,\;f_3=-\gamma(t)\displaystyle\int_{0}^AM(a,t)da.
\end{align}
Let 
\begin{align}\label{e3.42}
    Y(t)=\left( I(a,t), F(a,t),  M(a,t)\right)^T\in D(\mathcal{A}_m)
\end{align}
thus, we can rewrite the system \eqref{eq3.1} as an abstract Cauchy problem
\begin{equation}\label{e3.43}
\left\lbrace 
\begin{array}{ll}
\partial_tY(t)=(\mathcal{A}_m+H(Y(t)))Y(t),&\text{ in}\;Q_+\\
Y(0)=Y_0
\end{array}
\right.
\end{equation} 
where
\begin{align}
    Y_0=\left(I(a,0), F(a,0),  M(a,0)\right)^T,\; \text{and}\; f(Y(t))=H(Y(t))Y(t).
\end{align}

Investigating the well-posedness of system \eqref{eq3.1} reduces to studying equation \eqref{e3.43} along with its initial.  
\begin{proposition}
Under {\bf assumptions (H1)-(H2)-(H3)}, model \eqref{eq3.1} is well-posed.
\end{proposition}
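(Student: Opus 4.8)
The strategy is to work with the abstract Cauchy formulation \eqref{e3.43} and the semigroup framework already in place. Since $(\mathcal{A}_m, D(\mathcal{A}_m))$ generates the strongly continuous semigroup $\mathcal{T} = (\mathcal{T}_t)_{t\ge 0}$ on $\mathcal{H}_2^3$, I would work with \emph{mild} solutions, i.e. functions $Y \in C([0,T];\mathcal{H}_2^3)$ satisfying the variation-of-constants identity
\[
Y(t) = \mathcal{T}_t Y_0 + \int_0^t \mathcal{T}_{t-s}\, f(Y(s))\,ds,\qquad 0\le t\le T,
\]
with $f$ as in \eqref{e6.1}--\eqref{e3.40}. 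The plan is first to obtain a unique local mild solution by a fixed-point argument, and then to promote it to a global one by means of an a priori bound coming from the dissipative logistic structure of $f$.

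\textbf{Local well-posedness.} The first step is to verify that $f$ is locally Lipschitz on $\mathcal{H}_2^3$, uniformly on compact time intervals. Each component is the product of a state variable with an affine functional of the integrated state; for instance $F\mapsto F\,f_2 = -\gamma(t)\,F\int_0^A F\,da$ is quadratic, and for $F,\tilde F$ in the ball of radius $R$ of $L^2(0,A)$ the Cauchy--Schwarz bound $\big|\int_0^A F\,da\big|\le \sqrt{A}\,\|F\|_{L^2}$ yields $\|F f_2(F) - \tilde F f_2(\tilde F)\|_{L^2}\le C(R)\,\|F-\tilde F\|_{L^2}$, with $C(R)$ depending on $R$ and on the $L^\infty$-bounds furnished by \textbf{(H1)}; the components $f_1$ and $f_3$ are handled identically, the control $P$ being bounded. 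Granting this, the Picard--Banach fixed-point theorem applied to the right-hand side of the variation-of-constants identity on $C([0,T];\mathcal{H}_2^3)$ for $T$ sufficiently small produces a unique local mild solution depending continuously on $Y_0$; this is the classical semilinear theory. Denote by $[0,T_{\max})$ the maximal interval of existence.

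\textbf{Positivity and global existence.} The crux is to show $T_{\max}=+\infty$. Because \eqref{eq3.1} models densities, I would first establish that the non-negative cone of $\mathcal{H}_2^3$ is invariant: the semigroup $\mathcal{T}_t$ is positive (its characteristic representation involves only the non-negative mortality rates of \textbf{(H2)} and the non-negative renewal kernels $\beta,w$ of \textbf{(H3)}), and the reaction terms $I f_1,\,F f_2,\,M f_3$ are quasi-positive since each vanishes on the corresponding face $\{I=0\}$, $\{F=0\}$, $\{M=0\}$, so that $Y_0\ge 0$ forces $Y(t)\ge 0$ on $[0,T_{\max})$. Restricting to non-negative states, the quadratic terms in $f$ become genuine sinks: testing the $F$- and $M$-equations against the solution, the competition term $-\gamma(t)\big(\int_0^A F\,da\big)\|F\|_{L^2}^2$ is dissipative, while in the $I$-equation the linear growth $\Gamma(t)I$ is counteracted by the carrying-capacity sink $-\tfrac{\Gamma(t)\gamma(t)}{K(t)}\,I\int_0^A I\,da$, controlled through $K(t)\ge\epsilon>0$ from \textbf{(H1)}. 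The genuinely destabilising contributions are then at most linear (including the renewal boundary couplings, bounded via \textbf{(H3)}), so a Gronwall estimate gives a bound on $\|Y(t)\|_{\mathcal{H}_2^3}$ that stays finite on every bounded interval. By the blow-up alternative for maximal mild solutions, finiteness of the norm at $T_{\max}$ excludes $T_{\max}<\infty$, hence $T_{\max}=+\infty$ and the solution is global.

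The main obstacle I anticipate is precisely this interplay: the a priori bound that rules out finite-time blow-up relies on the \emph{signs} of the quadratic logistic terms, which are dissipative only for non-negative states; consequently the cone-invariance step is not cosmetic but a prerequisite for the global estimate, and making it rigorous at the level of mild solutions---where one cannot differentiate directly along the flow---is the delicate point, typically requiring a regularization (Galerkin or Yosida approximation) to justify the energy identities before passing to the limit. Combined with the local uniqueness and continuous dependence already obtained, global existence and uniqueness yield the asserted well-posedness of \eqref{eq3.1}.
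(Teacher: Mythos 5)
Your proposal is correct in its architecture, but it takes a genuinely different---and substantially more demanding---route than the paper. The paper's own proof is a three-line perturbation argument: it asserts that $f$ in \eqref{e6.1} is \emph{globally} Lipschitz on $\mathcal{H}_2^3$, deduces that the operator $H$ in the factorization $f(Y)=H(Y)Y$ is bounded, and invokes the Phillips bounded-perturbation theorem to conclude that $\mathcal{A}_m+H$ generates a $C_0$-semigroup, hence that \eqref{e3.43} has a unique mild solution; no positivity argument, no a priori estimate, and no blow-up alternative appear. Your route---local Lipschitz continuity of the quadratic nonlinearity on balls, Picard iteration for a local mild solution, invariance of the non-negative cone, dissipativity of the logistic terms on that cone, Gronwall, and the blow-up alternative---is the classical semilinear-theory argument. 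What your approach buys is rigor precisely where the paper is weakest: a quadratic map such as $F\mapsto -\gamma(t)\,F\int_0^A F\,da$ is only \emph{locally} Lipschitz on $L^2(0,A)$, so the paper's global-Lipschitz claim cannot be taken literally, and the Phillips theorem concerns a fixed bounded operator rather than the state-dependent family $H(Y(t))$; ruling out finite-time blow-up genuinely requires the sign information you extract from positivity and the carrying-capacity sink. What the paper's approach buys is brevity, and it would become legitimate if the nonlinearity were first truncated or the dynamics restricted to an invariant bounded set---steps the paper does not take. Your closing caveat, that the energy identities must be justified by regularization at the level of mild solutions, is well placed: that point, together with cone invariance, is exactly the content the paper's proof omits.
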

\begin{proof}
The function $f$ defined in \eqref{e6.1} is globally Lipschitz in $\mathcal H_2^3$, which implies that the associated matrix $H$ is bounded.
Since $\mathcal{A}_m$ is the generator of a $C_0$-semigroup on $\mathcal H_2^3$, and $H$ is a bounded, it follows from the Phillips theorem that $\mathcal{A}_m+H$ also generates a $C_0$-semigroup on $\mathcal H_2^3$.
Therefore, the Cauchy problem \eqref{e3.43} admits a unique mild solution in $\mathcal H_2^3.$
\end{proof}
\begin{remark}
By applying the method of characteristics to the system \eqref{eq3.1}, one finds that, for every $(a,t)\;\in Q$, the solutions of \eqref{eq3.1} can be written as follows:

\begin{align}\label{e3.45}
\begin{cases}
I(a,t)=I\bigl(0,\,t - a\bigr)\,e^{-\!\displaystyle\int_{0}^{\,a}\mu_{I}\bigl(\alpha,\,p\bigl(\alpha - (a - t)\bigr)\bigr)\,d\alpha 
\;+\;\int_{\,t - a}^{\,t}R_{I}(s)\,ds},
&  R_{I}(s)
=
\Gamma(t)\left(1-\dfrac{\gamma(t)}{K(t)}\displaystyle\int_{0}^AI(x,s)dx\right)
\;-\;P(s),\\
 F(a,t)=  F(0,t-a)\,e^{-\!\displaystyle\int_{0}^{\,a}\mu_{F}(\alpha)\,d\alpha 
   \;+\;\int_{\,t - a}^{\,t}R_{F}(s)\,ds},
   &   R_{F}(s)=-\displaystyle\int_{0}^{A}\gamma(t)\,F(x,s)\,dx,\\
M(a,t)= M(0,t-a)\,e^{-\!\displaystyle\int_{0}^{\,a}\mu_{M}(\alpha)\,d\alpha 
   \;+\;\int_{\,t - a}^{\,t}R_{M}(s)\,ds},
   &   R_{M}(s)
   =
   -\,\int_{0}^{A}\gamma(t)\,M(x,s)\,dx.
\end{cases}
\end{align}
\end{remark}
\section{\bf Proof of the main results}\label{sec7}
 This section is reserved for the proofs of the main results.
 \subsection{Proof of Theorem \ref{th3.6} \& Theorem \ref{th3.8}}
 This step focuses on the mathematical analysis of the stability of the model \eqref{eq3.1}. The objective is to examine how the biological control $P$, when applied to the aquatic population, influences the overall dynamics of the system.
 \subsubsection{\bf Stability in the absence of a delay term (Theorem \ref{th3.6})}\label{s341}
We arrive, under {\bf Assumption (H4)}, at the following system from \eqref{e3.55}
\begin{align}\label{e3.69}
   \partial_t\eta_I(t)=\zeta_I-P(t)+\Gamma(t)-\dfrac{\Gamma(t)\gamma(t)}{K(t)}e^{\eta_I}\displaystyle\int_{0}^A I^*(a)da.
\end{align}

From equation \eqref{e3.47}, by setting 
\begin{align}\label{e3.70}
    k_I=\displaystyle\int_{0}^A I^*(a)da,\;\phi_I(\eta_I)=k_I(e^{\eta_I}-1),\; 
\end{align}

we obtain
\begin{align}\label{e3.71}
  \partial_t\eta_I=P^*-P(t)-k_I\Gamma(t)(\frac{\gamma(t)}{K(t)}-\frac{1}{k_I})+k_I\Gamma^*(\frac{\gamma^*}{K^*}-\frac{1}{k_I})-\frac{\Gamma(t)\gamma(t)}{K(t)}\phi_I(\eta_I),
\end{align}

\begin{proof}[of Theorem \ref{th3.6}]
Using the Lyapunov candidate $V_{I}$, we get

\begin{align}\label{e3.73}
    \dot V_{I}
=\phi_{I}(\eta_{I}(t))\;\dot\eta_{I}(t).
\end{align}

From the equation \eqref{e3.71}, we substitute:

\begin{align}\label{e3.74}
    \dot V_{I}
= \phi_{I}(\eta_{I})\,\left[P^*-P(t)-k_I\Gamma(t)(\frac{\gamma(t)}{K(t)}-\frac{1}{k_I})+k_I\Gamma^*(\frac{\gamma^*}{K^*}-\frac{1}{k_I}) - \frac{\Gamma(t)\gamma(t)}{K(t)}\phi_{I}(\eta_{I})\right].
\end{align}
By choosing a control of the form \eqref{e3.75}
and thanks to {\bf Assumption (H1)}, it follows that
\begin{align}\label{e3.76}
    \dot V_{I}
= -\frac{\Gamma(t)\gamma(t)}{K(t)}\phi_{I}(\eta_{I})^{2}
\end{align}
holds; consequently, system \eqref{eq3.1}  is asymptotically stable.  
\end{proof}

\begin{remark}
The time derivative of the Lyapunov function $V_I$ can be written equivalently in quadratic form as

 \begin{align}\label{e3.77}
     \dot V_I(\eta)
 \;=\;
 -\,\bigl[\phi_{I}\;\;\phi_{I}\bigr]
  \;Q(t)\;
  \begin{bmatrix}\phi_{I} \\[4pt]\phi_{I}\end{bmatrix},\;\text{where}\;  Q(t) \;=
 \begin{pmatrix}
 \frac{\Gamma(t)^2\gamma(t)}{K(t)(\Gamma(t)+\gamma(t)-2K(t))} & -\; \frac{\Gamma(t)\gamma(t)}{\Gamma(t)+\gamma(t)-2K(t)}\\[4pt]
 -\; \frac{\Gamma(t)\gamma(t)}{\Gamma(t)+\gamma(t)-2K(t)}                & \frac{\Gamma(t)\gamma(t)^2}{K(t)(\Gamma(t)+\gamma(t)-2K(t))}
 \end{pmatrix}.
 \end{align}

 For $Q(t)$ to be positive definite, one requires

\begin{align}
\begin{cases}\label{equation7.9}
K(t)^2<\Gamma(t)\gamma(t),\quad\text{a.e.}\;t\in\mathbb{R}_+,\\
\\
2K(t)<\Gamma(t)+\gamma(t),\quad\text{a.e.}\;t\in\mathbb{R}_+.
\end{cases}
\end{align}
Consequently, its smallest eigenvalue is
\begin{align}\label{e3.79}
    \lambda_{\min}(Q(t))
 =\frac{2\Gamma(t)\gamma(t)}{K(t)}\frac{\Gamma(t)\gamma(t)-K(t)^2}{(\Gamma(t)+\gamma(t)-2K(t))(\Gamma(t)+\gamma(t))+\sqrt{(\Gamma(t)+\gamma(t)-2K(t))^2\left[(\Gamma(t)-\gamma(t))^2+4K(t)^2\right]}}.
\end{align}
Condition \eqref{equation7.9} implies the following relation
\begin{align}\label{e3.78}
\dfrac{K(t)}{\sqrt{\Gamma(t)}+1}<\sqrt{\gamma(t)+1}\quad t\in\mathbb{R}_+.
\end{align}
From \eqref{equation7.9}, there exists a constant $\delta_{\lambda}>0$ such that, for all $t$,
$\lambda_{\min}\big(Q(t)\big)\ge \delta_{\lambda}.$ Since the functions $\gamma(t),\;\Gamma(t)$ and $K(t)$ are bounded, we then obtain a constant $c>0$ such that, for all $t$, $\delta_{\lambda} \le \lambda_{\min}\big(Q(t)\big)\le c\;\text{a.e.}\; t\in\mathbb{R}_+.$
\end{remark}
\subsubsection{\bf Stability in the presence of a delay term (Theorem \ref{th3.8})}\label{s342}
We study the system’s stability when the kernel $\psi_I$, associated with the aquatic population $I$, is nonzero. This assumption takes into account the effect of the female population on the aquatic dynamics. Under {\bf Assumption (H5)}, we have from \eqref{e3.47}-\eqref{e3.55} 
\begin{align}\label{3.80}
   \partial_t\eta_I(t)=P^*-P(t)-k_I\Gamma(t)\left(\frac{\gamma(t)}{K(t)}-\frac{1}{k_I}\right)+k_I\Gamma^*\left(\frac{\gamma^*}{K^*}-\frac{1}{k_I}\right)-\frac{\Gamma(t)\gamma(t)k_I}{K(t)}\Biggl(\frac{e^{\eta_{I}}}{k_I}\int_{0}^{A}I^{*}(a)\bigl(1+\psi_{I}(t-a)\bigr)\,\mathrm{d}a-1\Biggr).
\end{align}
Define the normalized kernel

\begin{align}\label{e3.81}
g(a)
=\frac{I^{*}(a)}
{\displaystyle\int_{0}^{A}I^{*}(a)\,\mathrm{d}a},\; 
\quad
\int_{0}^{A}g(a)\,\mathrm{d}a=1,
\end{align}

Then
\begin{align}\label{e3.82}
    \frac{1}{k_I}\int_{0}^{A}I^{*}(a)\bigl(1+\psi_{I}(t-a)\bigr)\,\mathrm{d}a
=(1+\int_{0}^{A}g(a)\,\psi_{I}(t-a)\,\mathrm{d}a),
\end{align}
substituting gives

\begin{align}\label{e3.83}
    \partial_{t}\eta_{I}(t)
= P^*-P(t)-k_I\Gamma(t)(\frac{\gamma(t)}{K(t)}-\frac{1}{k_I})+k_I\Gamma^*(\frac{\gamma^*}{K^*}-\frac{1}{k_I})
\;-\;\frac{\Gamma(t)\gamma(t)}{K(t)}k_I\Bigl(e^{\eta_{I}}
\bigl[1+\!\int_{0}^{A}g(a)\psi_{I}(t-a)\,\mathrm{d}a\bigr]
-1\Bigr).
\end{align}
We therefore introduce the function
\begin{align}\label{e3.84}
\hat{\phi}_1= k\Bigl(
e^{\displaystyle\eta_{I}+\displaystyle\ln\bigl(1+\displaystyle\int_{0}^{A}g(a)\,\psi_{I}(t-a)\,\mathrm{d}a\bigr)}
-1\Bigr),
\end{align}
so that

\begin{align}\label{e3.85}
    \partial_{t}\eta_{I}
= P^*-P(t)-k_I\Gamma(t)(\frac{\gamma(t)}{K(t)}-\frac{1}{k_I})+k_I\Gamma^*(\frac{\gamma^*}{K^*}-\frac{1}{k_I}) \;-\frac{\Gamma(t)\gamma(t)}{K(t)}\;\hat\phi_{1}.
\end{align}

Finally, choosing the control \eqref{e3.75} yields
\begin{align}\label{e3.86}
    \partial_{t}\eta_{I}= \;-\frac{\Gamma(t)\gamma(t)}{K(t)}\;\hat\phi_{1},
\end{align}
with \(\psi_{i}\) defined in \eqref{e3.56}.

 \begin{proof}[of Theorem \ref{th3.8}]
Recall that

\begin{align*}
\dot V_{I}(\eta_I)
= -\frac{1}{2}\left(\bigl[\phi_{I}\;\;\phi_{I}\bigr]
  \;Q(t)\;
  \begin{bmatrix}\phi_{I} \\[4pt]\phi_{I}\end{bmatrix}+\bigl[\hat{\phi}_{1}\;\;\hat{\phi}_{1}\bigr]
  \;Q(t)\;
  \begin{bmatrix}\hat{\phi}_{1} \\[4pt]\hat{\phi}_{1}\end{bmatrix}\right)+\dfrac{\Gamma(t)\gamma(t)}{2K(t)}\Vert \hat{\phi}_{1}-\phi_I\Vert^2.
\end{align*}


Since $Q(t)$ is symmetric and positive semi-definite with strictly positive smallest eigenvalue $\lambda_{\min}(Q(t))>0$, it follows immediately that
\begin{align}\label{e93}
    \dot V_{I}(\eta_I)\;\le\; -\,\frac{\lambda_{\min}(Q(t))}{2}\,(\|\phi_I\|^{2}+\|\hat{\phi}_1\|^{2})+\dfrac{\Gamma(t)\gamma(t)}{2K(t)}\Vert \hat{\phi}_{1}-\phi_I\Vert^2.
\end{align}
and 
\begin{align}\label{e3.94}
    \Vert \hat{\phi}_{1}-\phi_I\Vert^2=(\phi_I+k_I)^2(e^{v_I}-1)^2,\;v_I=\displaystyle\ln\bigl(1+\displaystyle\int_{0}^{A}g(a)\,\psi_{I}(t-a)\,\mathrm{d}a\bigr)
\end{align}
then 
\begin{align}\label{e3.95}
\dot V_{1}(\eta_I)\;\le\; -\,\frac{\lambda_{\min}(Q(t))}{2}\,(\|\phi_I\|^{2}+\|\hat{\phi}_1\|^{2})+\dfrac{\Gamma(t)\gamma(t)}{2K(t)}(\phi_I+k_I)^2(e^{v_I}-1)^2.
\end{align}
For the second term $h(G_{I}(\psi_{I}))$, the Dini-derivative estimate \eqref{e3.90} implies
\begin{align}\label{e3.96}
    D^+h(G_I)=\frac{e^{G_I}-1}{G_I}\,D^+G_I\leq -\sigma_I (e^{G_I}-1).
\end{align}



By applying Young’s inequality  thanks to $\vert v_I\vert\leq G_I(\psi_I)$, we obtain




\begin{align}\label{e3.97}
    D^{+}V(\eta,\psi)
\le\;
 -\,\frac{3\lambda_{\min}(Q(t))}{4}\,\|\phi_I\|^{2}+\left[\frac{K(t)\lambda_{\min}(Q(t))+\Gamma(t)\gamma(t)}{2K(t)}(\phi_I+k_I)^2-\gamma_1\right](e^{G_I}-1)^2.
\end{align}
From {\bf Assumption (H1)}, we have
\begin{align}
\frac{K(t)\lambda_{\min}(Q(t))+\Gamma(t)\gamma(t)}{2K(t)}\leq \frac{\Vert K\Vert_{\infty}c+\Vert\Gamma\Vert_{\infty}\Vert\gamma\Vert_{\infty}}{2\epsilon}=:C,
\end{align}
then
\begin{align}
    D^{+}V(\eta,\psi)
\le\;
 -\,\frac{3\lambda_{\min}(Q(t))}{4}\,\|\phi_I\|^{2}+\left[C(\phi_I+k_I)^2-\gamma_1\right](e^{G_I}-1)^2.
\end{align}
Finally, with $(\eta_I,\psi_I)\in\mathcal{A}$, we obtain the required estimate, and hence the equilibrium is globally asymptotically stable. 
\end{proof}
\subsection{Proof of Theorem \ref{th5.4}}
\begin{lemma}
The nonlinear output \eqref{eq8.1} of \eqref{eq3.1}  is given by 
\begin{align}\label{eq8.2}
\begin{cases}
\dfrac{\partial_t y}{y}=\zeta_I-P(t)+\Gamma(t)-\frac{\Gamma(t)\gamma(t)}{K(t)}y(t)\displaystyle\int_{0}^A (1+\psi_I(t-a))p(a)da+\dfrac{\tilde{p}(0)\psi(t)-\tilde{p}(A)\psi(t-A)+\displaystyle\int_0^A\frac{d\tilde{p}(a)}{da}\psi(t-a)da}{q(\psi)}\\
y(0)=\displaystyle\int_0^Aw(a)I_0(a)da=y_0,\\
\psi_I(t)=\displaystyle\int_0^Ag_F(a)\psi_F(t-a)F^*(a)da,\;\psi_F(t)=\displaystyle\int_0^Ag_I(a)\psi_I(t-a)I^*(a)da,\\
 \psi_{i}(-a)
=\frac{i_0(a)}{i^{*}(a)\,\Pi[i_0]}\;-\;1
= \psi_{i,0}(a).
\end{cases}
\end{align}
with $p(a)=\dfrac{I^*(a)}{y^*},\;\tilde{p}(a)=w(a)p(a)$ and $q(\psi)=1+\displaystyle\int_0^A\tilde{p}(a)\psi(t-a)da,$ where the function $\tilde{p}$ satisfies $\tilde p(a)\geq 0$ for all $a\in (0,A)$ and $\displaystyle\int_0^A\tilde{p}(a)da=1.$
\end{lemma}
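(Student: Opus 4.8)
The plan is to build the output equation directly from the factorized representation of the aquatic density furnished by Lemma~\ref{le3.4}, rather than differentiating the transport equation \eqref{eq3.1} term by term. First I would insert $I(a,t)=I^{*}(a)\bigl(1+\psi_{I}(t-a)\bigr)e^{\eta_{I}(t)}$ from \eqref{e3.58} into the output functional \eqref{eq8.1}. Pulling the factor $e^{\eta_{I}(t)}$ out of the age integral and writing $w(a)I^{*}(a)=y^{*}\tilde{p}(a)$ with $y^{*}=\int_{0}^{A}w(a)I^{*}(a)\,da$ the steady-state output, one obtains the compact identity $y(t)=y^{*}e^{\eta_{I}(t)}q(\psi)$, where $q(\psi)=1+\int_{0}^{A}\tilde{p}(a)\psi_{I}(t-a)\,da$ is exactly the quantity named in the statement. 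At this stage I would record the two structural properties of the kernel: $\tilde{p}(a)=w(a)I^{*}(a)/y^{*}\ge 0$ because $w,I^{*}\ge0$ by \textbf{(H3)} and \eqref{e3.48}, and $\int_{0}^{A}\tilde{p}(a)\,da=1$ by the very choice of $y^{*}$.

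The second step is a logarithmic differentiation. Since $y^{*}$ is constant, differentiating $\ln y(t)=\ln y^{*}+\eta_{I}(t)+\ln q(\psi)$ gives the splitting $\partial_t y/y=\partial_t\eta_{I}+\dot q/q$. For $\partial_t\eta_{I}$ I would quote the evolution law \eqref{e3.55} verbatim; substituting $I^{*}(a)=y^{*}p(a)$ and the relation $y^{*}e^{\eta_I}=y/q$ recasts its nonlocal logistic contribution in terms of $y(t)$, $q(\psi)$ and the density $p(a)=I^{*}(a)/y^{*}$, which supplies all but the last term of \eqref{eq8.2}. As an independent cross-check one may differentiate $y$ directly under the integral using \eqref{eq3.1} and the representation $\partial_t I = I\,\partial_t\eta_I + I^{*}e^{\eta_I}\psi_I'(t-a)$; both routes lead to the same decomposition $\partial_t y/y=\partial_t\eta_{I}+\dot q/q$, which reassures that no advection or mortality boundary term has been dropped.

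The third step produces the boundary terms, and it is here that the integration by parts enters. Differentiating under the integral sign gives $\dot q=\int_{0}^{A}\tilde{p}(a)\,\psi_{I}'(t-a)\,da$; writing $\psi_{I}'(t-a)=-\tfrac{d}{da}\psi_{I}(t-a)$ and integrating by parts in the age variable converts this into $\tilde{p}(0)\psi(t)-\tilde{p}(A)\psi(t-A)+\int_{0}^{A}\tfrac{d\tilde{p}}{da}(a)\,\psi(t-a)\,da$, i.e. precisely the numerator of the last fraction of \eqref{eq8.2}, once divided by $q(\psi)$. Assembling the two contributions yields the claimed scalar renewal equation, while the initial condition $y(0)=\int_0^A w(a)I_0(a)\,da$ and the $\psi_I,\psi_F$ identities are simply inherited from \eqref{e3.56}.

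The hard part will be the regularity bookkeeping attached to that integration by parts. The statement already writes $\tfrac{d\tilde{p}}{da}$ and evaluates $\tilde{p}(0),\tilde{p}(A)$ pointwise, so I must justify that $\tilde{p}$ is absolutely continuous on $[0,A]$. The exponential profile \eqref{e3.48} makes $I^{*}$ absolutely continuous, but \textbf{(H3)} only gives $w\in L^{\infty}$, so the product $\tilde{p}=wI^{*}/y^{*}$ is not a priori differentiable; the clean derivation therefore needs either a strengthening of \textbf{(H3)} to $w$ absolutely continuous (so that $\tfrac{d\tilde{p}}{da}\in L^{1}$ and the trace values exist), or an interpretation of the boundary terms in a weak sense against the continuous map $a\mapsto\psi_{I}(t-a)$. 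In parallel I would secure the continuity of $a\mapsto\psi_{I}(t-a)$ on $[-A,0]$ from the mild-solution framework and the class $C^{0}((-A,0);(-1,\infty))$ used in Remark~\ref{remark4.2}, and justify differentiation under the integral by dominated convergence using the boundedness of $\tilde{p}$ and the continuity of $\psi_I$ in time. Once these points are settled, the algebra of the first three steps is routine.
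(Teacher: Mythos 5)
Your proposal follows essentially the same route as the paper's proof: substitute the factorization \eqref{e3.58} into the output \eqref{eq8.1} to obtain $y(t)=y^{*}e^{\eta_I(t)}q(\psi)$, take a logarithmic derivative, quote \eqref{e3.55} for $\partial_t\eta_I$, and convert $\partial_t q$ into the boundary terms by an integration by parts in age. You in fact supply details the paper suppresses — the correct splitting $\partial_t y/y=\partial_t\eta_I+\partial_t q(\psi)/q(\psi)$ (the paper's displayed relation omits the division by $q$), the explicit integration by parts producing $\tilde p(0)\psi(t)-\tilde p(A)\psi(t-A)+\int_0^A \frac{d\tilde p}{da}\psi(t-a)\,da$, and the absolute continuity of $\tilde p$ that this step tacitly requires under \textbf{(H3)} — so the attempt is correct.
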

\begin{proof}
Using equation \eqref{e3.55} in \eqref{eq8.1} and then integrating with respect to age, the output state can be written as
\begin{align}
y(t)&=e^{\eta_I}\,y^*\,q(\psi).
\end{align}
Moreover, the following relation holds:
\begin{align}
\frac{\partial_t y}{y}&=\partial_t\eta_I+\partial_tq(\psi).
\end{align}
Finally, by replacing $I^*(a)e^{\eta_I}$ with $I_d(a,t)e^{\eta_I}$ in \eqref{e3.58}, one obtains the $\dfrac{\partial_t y}{y}$ term appearing in \eqref{eq8.2}
\end{proof}
In order to achieve the output $y$ matching the reference trajectory $y_d$, we replace the system output with $y_d$ in \eqref{eq8.2} and determine the corresponding control. Considering the convergence of $\psi_I$ to zero (see \ref{remark4.2}) , we then obtain
\begin{align}
P_{FF}(t)=\zeta_I-\dfrac{\partial_t y_d}{y_d}+\Gamma(t)-p^*\frac{\Gamma(t)\gamma(t)}{K(t)}y_d(t).
\end{align}
Let us the following lemma
\begin{lemma} Define $I_d(a,t)=p(a)y_d(t).$ Consider the following transformation
\begin{align}
 \left[\begin{array}{c}
v_I(t) \\ 
\psi_I(t-a)
\end{array}  \right]=\left[\begin{array}{c}
\ln[\tilde \Pi_I(I(t))] \\ 
\dfrac{I(a,t)}{I_d(a,t)\tilde\Pi_I(I(t))}-1
\end{array} \right],
\end{align}
where
\begin{align}
\tilde\Pi_I(I(t))=\dfrac{\langle \pi_{0,I}, I(t)\rangle_{L^2(0,A)}}{\langle\pi_{0,I}, p(a)y_d(t)\rangle_{L^2(0,A)}},
\end{align}

and $\pi_{0,I}$ the continuous function of \eqref{e3.54}. Moreover, the variables $\psi_i$ and $v_I$ satisfy:

\begin{align}\label{eq8.6}
\begin{cases}
    \partial_tv_I(t)=\zeta_I-P(t)-\dfrac{\partial_ty_d(t)}{y_d(t)}+\Gamma(t)-\frac{\Gamma(t)\gamma(t)}{K(t)}e^{v_I}\displaystyle\int_{0}^A (1+\psi_I(t-a))I_d(a,t)da,\\
v_{I}(0)
= \ln\!\bigl(\tilde\Pi[I_{0}]\bigr)
= v_{I0},
\end{cases}
\end{align}
and $\psi_I,\psi_F$ satisfying \eqref{e3.56}. The unique solution is given by 

\begin{align}\label{eq7.34}
I(a,t)
=I_d(a,t)e^{v_{I}(t)}\,\bigl(1+\psi_{I}(t-a)\bigr).
\end{align}
\end{lemma}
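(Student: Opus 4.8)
The plan is to reduce this statement to Lemma~\ref{le3.4} rather than re-differentiate the weighted integral $\langle\pi_{0,I},I(t)\rangle_{L^2}$ from scratch. The crucial structural observation is that the reference profile factorizes as $I_d(a,t)=p(a)\,y_d(t)=\tfrac{y_d(t)}{y^*}\,I^*(a)$: it is nothing but the steady-state age profile $I^*$ rescaled by the scalar $y_d(t)/y^*$. Since the weight $\pi_{0,I}$ of \eqref{e3.54} and the profile $p$ are both time-independent, the denominator of $\tilde\Pi_I$ separates as $\langle\pi_{0,I},p\,y_d(t)\rangle_{L^2}=\tfrac{y_d(t)}{y^*}\langle\pi_{0,I},I^*\rangle_{L^2}$, whence $\tilde\Pi_I(I(t))=\tfrac{y^*}{y_d(t)}\,\Pi_I(I(t))$ with $\Pi_I$ as in \eqref{e3.53}. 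Taking logarithms gives the scalar identity $v_I(t)=\eta_I(t)+\ln\!\big(y^*/y_d(t)\big)$, which is the backbone of the whole argument.

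First I would use this factorization to transfer the inversion formula and the $\psi$-equations. Multiplying the two relations yields $I_d(a,t)\,\tilde\Pi_I(I(t))=\tfrac{y_d(t)}{y^*}I^*(a)\cdot\tfrac{y^*}{y_d(t)}\Pi_I(I(t))=I^*(a)\,\Pi_I(I(t))$, so the denominator in the second component of the present transformation is \emph{identical} to the one in \eqref{e3.52}. Consequently $\psi_I(t-a)$ defined here coincides exactly with the $\psi_I$ of Lemma~\ref{le3.4}, and likewise for $\psi_F$; hence \eqref{e3.56} is inherited verbatim. The inversion formula \eqref{eq7.34} then follows by the same one-line algebra as \eqref{e3.58}, namely $I_d\,e^{v_I}\,(1+\psi_I)=I_d\,\tilde\Pi_I(I(t))\cdot\frac{I}{I_d\,\tilde\Pi_I(I(t))}=I$.

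Next I would derive the evolution \eqref{eq8.6}. Differentiating $v_I=\eta_I+\ln(y^*/y_d)$ in time gives $\partial_tv_I=\partial_t\eta_I-\partial_ty_d/y_d$, which already produces the extra term $-\partial_ty_d/y_d$. I would then insert the right-hand side of \eqref{e3.55} for $\partial_t\eta_I$ and rewrite the logistic integral using $e^{\eta_I}I^*(a)=e^{v_I}\tfrac{y_d(t)}{y^*}I^*(a)=e^{v_I}I_d(a,t)$; pulling the $a$-independent factor $e^{v_I}$ out of the integral turns the $I^*$-weighted integral of \eqref{e3.55} into the $I_d$-weighted integral of \eqref{eq8.6}. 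The initial value $v_I(0)=\ln\tilde\Pi[I_0]$ is immediate from the definition, closing the computation.

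The computation here is light; the only points requiring care are the well-posedness of the change of variables and the uniqueness claim. For the former one checks positivity of $y_d$ on $[0,\infty)$ (guaranteed by the hypothesis $y_d>0$) and of $\langle\pi_{0,I},I(t)\rangle_{L^2}$ along nonnegative trajectories, so that the logarithm and the quotients are well defined; for the latter I would note that $I\mapsto(v_I,\psi_I)$ and the reconstruction \eqref{eq7.34} are mutually inverse, so that uniqueness of the mild solution of \eqref{eq3.1} established in Section~\ref{sec6} transfers to uniqueness of $(v_I,\psi_I)$ and of \eqref{eq7.34}. The main conceptual obstacle, if any, is simply recognizing the factorization $I_d=\tfrac{y_d}{y^*}I^*$ that collapses the entire derivation onto Lemma~\ref{le3.4}; once it is seen, no genuinely new estimate is needed.
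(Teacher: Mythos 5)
Your proposal is correct, but it does not follow the paper's route. The paper's proof is a one-liner: ``use the same strategy as in Lemma \ref{le3.4}'', i.e.\ re-run the adjoint-eigenfunction projection argument of Appendix~\ref{annexe:A} (multiply by $\pi_{0,I}$, integrate by parts, isolate the neutral mode) with $I^*(a)$ replaced by the reference profile $I_d(a,t)$. You instead treat Lemma \ref{le3.4} as a black box and deduce the present lemma purely algebraically from the factorization $I_d(a,t)=p(a)y_d(t)=\tfrac{y_d(t)}{y^*}I^*(a)$: this gives $\tilde\Pi_I(I(t))=\tfrac{y^*}{y_d(t)}\Pi_I(I(t))$, hence $v_I=\eta_I+\ln\!\bigl(y^*/y_d\bigr)$, the identity $I_d\,\tilde\Pi_I=I^*\,\Pi_I$ (so that your $\psi_I$ is literally the same object as in \eqref{e3.52}, and \eqref{e3.56} is inherited verbatim), and then \eqref{eq8.6} and \eqref{eq7.34} follow by differentiating the scalar shift and substituting $e^{\eta_I}I^*(a)=e^{v_I}I_d(a,t)$ into \eqref{e3.55}. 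I checked each of these identities; they are exact, and the extra term $-\partial_t y_d/y_d$ appears for precisely the reason you state. Your route buys two things: it eliminates any re-derivation of the weighted-integral dynamics, and it makes structurally transparent why the $\psi_I$-subsystem — and therefore the exponential decay of $\psi_I$ invoked later in the proof of Theorem \ref{th5.4} — is unaffected by passing from stabilization to tracking; it also addresses well-definedness ($y_d>0$, positivity of $\langle\pi_{0,I},I(t)\rangle$) and uniqueness more explicitly than the paper does. What the paper's approach buys in exchange is independence from the special form of the reference: repeating the projection computation would still work if $I_d(a,t)$ were not proportional to $I^*(a)$ in age, whereas your reduction hinges entirely on $p(a)=I^*(a)/y^*$; for the lemma as actually stated, however, both arguments are valid and yield the same conclusions.
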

\begin{proof}
For the proof, we use the same strategy as in Lemma \ref{le3.4}.
\end{proof}
In the feedback control definition \eqref{equation4.12}, the expression $\ln\frac{y}{y_d}$ can be written in the form
\begin{align}
\displaystyle\ln\frac{y(t)}{y_d(t)}=v_I+\ln\left(1+\displaystyle\int_0^A\tilde{p}(a)\psi_I(t-a)da\right),
\end{align}
which serves as the basis for the controller design.
\begin{proof}[of Theorem \ref{th5.4}]
Decompose $P(t)=P_{FF}(t)+P_{FB}(t)$ and substitute $P_{FF}(t)$ into \eqref{eq8.6}. we obtain 
\begin{align}
\partial_t v_I(t)=\frac{\Gamma(t)\gamma(t)}{K(t)}p^*y_d(t)-\frac{\Gamma(t)\gamma(t)}{K(t)}y_d(t)\displaystyle\int_{0}^A p(a)e^{v_I}(1+\psi_I(t-a))da-sat_{\bar{\mathcal{U}}}(P_{FB}(t)),
\end{align}
\begin{align}\label{eq7.37}
\partial_tv_I(t)=\frac{\Gamma(t)\gamma(t)}{K(t)}\left[\displaystyle\int_0^AI_d(a,t)\left(1-e^{v_I}(1+\psi_I(t-a))\right)\right]-sat_{\bar{\mathcal{U}}}(P_{FB}(t)),
\end{align}
where $\bar{\mathcal{U}}=[\bar P_{\min},\bar P_{\max}]=[P_{\min}-P_{FF},P_{\max}-P_{FF}].$ From  \cite[Lemma 4.3]{ref69}, the inequality 
\begin{align}
\partial_t F(\psi_I) \leq -\sigma F(\psi_I)
\end{align}
holds for all $\psi_I \in \mathcal{S},$ where $F(\psi_I)=\dfrac{\max_{a\in(0,A)}(e^{-\sigma a}\vert\psi_I(t-a)\vert)}{1+\min(\min_{a\in(0,A)}(\psi_I(t-a)),0)}.$   Consider the functional
\begin{align}\label{eq7.39}
W(v_I,\psi_I)=v_I^2+\delta F(\psi_I),\;\delta>0\end{align}
and its time derivative of the form
\begin{align}\label{eq7.41}
\partial_tW=\partial_tv^2_I-\sigma\delta F(\psi_I).
\end{align}
Thanks to \eqref{eq7.37},  we have 
\begin{align}
\begin{aligned}
\partial_tv^2_I(t)=&2v_I\frac{\Gamma(t)\gamma(t)}{K(t)}\left[\displaystyle\int_0^AI_d(a,t)\left(1-e^{v_I}\right)\right]\\
&-2v_I\frac{\Gamma(t)\gamma(t)}{K(t)}\left[\displaystyle\int_0^AI_d(a,t)e^{v_I}\psi_I(t-a)\right]\\
&-2v_Isat_{\bar{\mathcal{U}}}(P_{FB}(t))
\end{aligned}
\end{align}
For $v_I<0,$ we have 
\begin{align}
\partial_tv_I^2\leq e^{\sigma A}y_d(t)p^*F(\psi_I)-2v_Isat_{\bar{\mathcal{U}}}(P_{FB}(t))
\end{align}
and by using the same strategy as in \cite{ref82}, we obtain
\begin{align}\label{eq7.43}
\partial_tv_I^2\leq \left(\dfrac{4v_I^2}{1+v_I}+2e^{\sigma A+1}F(\psi_I)\right)y_d(t)p^*-2v_Isat_{\bar{\mathcal{U}}}(P_{FB}(t))
\end{align}
for every $v_I.$ Also, the inequality
\begin{align}\label{eq7.44}
-2v_Isat_{\bar{\mathcal{U}}}(P_{FB}(t))\leq -\min(2,\alpha)\min(1,\bar P_{\min},\bar P_{\max})\dfrac{v_I^2}{1+\vert v_I\vert}+8\max(\bar P_{\min},\bar P_{\max})e^{\sigma A}F(\psi_I)
\end{align}
holds  from \cite[Lemma 6]{ref82}.  Upon substituting \eqref{eq7.43} into \eqref{eq7.41} and invoking \eqref{eq7.44}, we obtain 
\begin{align}\label{eq7.45}
\partial_tW\leq \mu_1\dfrac{v_I^2}{1+v_I}-\mu_2F(\psi_I)
\end{align}
where, for appropriate positive constants,
\begin{align}
\begin{aligned}
\mu_1=&\min(2,\alpha)\min(1,\bar P_{\min},\bar P_{\max})-4y_dp^*,\\
\mu_2=&\sigma\delta-8\max(\bar P_{\min},\bar P_{\max})e^{\sigma A}-2e^{\sigma A+1}y_d(t)p^*.
\end{aligned}
\end{align}
Choosing $\mu_\geq 0$ and $\mu_2>0,$ we get
\begin{align}
y_d(t)\in\left(0,\frac{\min(2,\alpha)}{4p^*}\min\left(1,\frac{P_{\max}-P_{\min}}{2}\right)\right]
\end{align} 
\begin{align}
\delta>&\frac{e^{\sigma A}}{\sigma}\left(8\max(\bar P_{\min},\bar P_{\max})+2ey_d(t)p^*\right)
\end{align}
and we define
\begin{align}
P_{FF}(t)\in \left[\frac{4p^*}{\min(2,\alpha)}y_d(t)+P_{\min},P_{\max}-\frac{4p^*}{\min(2,\alpha)}y_d(t)\right].
\end{align}
From \eqref{eq7.45}, we have 
\begin{align}
\begin{aligned}
\partial_tW\leq &\mu_1\dfrac{v_I^2}{1+\sqrt{\vert v_I\vert^2+\delta F(\psi_I)}}-\mu_2\dfrac{F(\psi_I)}{1+\sqrt{\vert v_I\vert^2+\delta F(\psi_I)}}
\end{aligned}
\end{align}
From \eqref{eq7.39} we have $F(\psi_I)=\frac{1}{\delta}(W-v_I^2),$ and 
\begin{align}
\mu_1v_I^2+\mu_2F(\psi_I)=\mu_1v_I^2+\frac{\mu_2}{\delta}(W-v_I^2)\geq \underbrace{\min(\mu_1,\frac{\mu_2}{\delta})}_{L}W,
\end{align}
thus, we obtain the structural inequality
\begin{align}\label{eq7.51}
\partial_tW\leq -L\dfrac{W}{1+\sqrt{W}}.
\end{align}
Integrating this inequality (see \cite{ref69}) yields 
\begin{align}\label{eq7.52}
\begin{aligned}
v_I^2\leq &\bar W(v_{I0},\psi_{I0})e^{\max(0,\bar W(v_{I0},\psi_{I0})-1)}e^{-\frac{L}{2}t},\\
\delta F(\psi_I)\leq &\bar W(v_{I0},\psi_{I0})e^{\max(0,\bar W(v_{I0},\psi_{I0})-1)}e^{-\frac{L}{2}t}.
\end{aligned}
\end{align}
Using \eqref{eq7.34} and the fact that $\vert\ln(1+x)\vert\leq \dfrac{\vert x\vert}{1+\min(0,x)}$ for all $x>-1,$ we get for every $a\in (0,A)$
\begin{align}
\displaystyle\left\vert\ln\left(\frac{I(a,t)}{I_d(a,t)}\right)\right\vert\leq\vert v_I\vert+\dfrac{\vert\psi_I(t-a)\vert}{1+\min(\psi_I(t-a),0)}\leq \vert v_I\vert+\dfrac{e^{\sigma A}\max_{a\in(0,A)}(e^{-\sigma a}\vert\psi_I(t-a)\vert)}{1+\min(\min_{a\in(0,A)}(\psi_I(t-a)),0)}
\end{align}
\begin{align}
\displaystyle\left\Vert\ln\left(\frac{I(a,t)}{I_d(a,t)}\right)\right\Vert_{\infty}\leq \sqrt{\vert v_I\vert^2}+e^{\sigma A} F(\psi_I),
\end{align}
and from \eqref{eq7.52}
\begin{align}
\displaystyle\left\Vert\ln\left(\frac{I(a,t)}{I_d(a,t)}\right)\right\Vert_{\infty}\leq \left(\sqrt{\bar W(v_{I0},\psi_{I0})}+\dfrac{e^{\sigma A}}{\delta}\bar W(v_{I0},\psi_{I0})\right)e^{\max(0,\bar W(v_{I0},\psi_{I0})-1)}e^{-\frac{L}{4}t}.
\end{align}
 Set  $\kappa_W(s)=(\sqrt{s}+\frac{e^{\sigma A}}{\delta}s)e^{\max(0,s-1)},$ then we obtain   
 \begin{align}
\displaystyle\left\Vert\ln\left(\frac{I(a,t)}{I_d(a,t)}\right)\right\Vert_{\infty}\leq \kappa_W(\bar W(v_{I0},\psi_{I0}))e^{-\frac{L}{4}t}
 \end{align}
 Express $\kappa_W$ as a function of the initial conditions. We have 
\begin{align}
v_{I0}\leq  \displaystyle\left\Vert\ln\left(\frac{I_0(a)}{I_d(a,0)}\right)\right\Vert_{\infty}=:\xi=:\kappa_{v_I}(\xi)
\end{align}
\begin{align}
F(\psi_I)\leq e^{2\xi}(e^{2\xi}-1)=:\kappa_{\psi}(\xi)
\end{align}
Thus, we obtain estimate \eqref{eq5.11} with $\kappa(\xi)=\kappa_W(\bar W(\kappa_{v_I}(\xi),\kappa_{\psi}(\xi))).$
\end{proof}
\begin{remark}
Since the original system is nonlinear, the stability analysis of the linearized system \eqref{eq3.11}, whose zero eigenvalue corresponds to the equilibrium profiles $(I^*(a),F^*(a), M^*(a))$, is not sufficient to guarantee overall stability. We therefore employ a nonlinear method based on the study of an adjoint mode. To this end, we introduce an adjoint eigenfunction ($\pi_{0,F},\pi_{0,I}, \pi_{0,M}$) associated with the zero eigenvalue of the linearized operator.  This eigenfunction acts as a filter : it allows us to project the nonlinear perturbations onto the critical age–time direction corresponding to the neutral spectral subspace. By projecting in $L^2(0,A)$, we exactly isolate the neutral mode of the dynamics \eqref{e3.63}, rather than applying an arbitrary projection. Projecting the full nonlinear dynamics onto this mode reduces the problem to a single ordinary differential equation (ODE) \eqref{e3.55} for the perturbation amplitude. Analyzing this ODE, then determines the asymptotic stability of the equilibrium. Hence, the adjoint eigenfunction is essential for completing the nonlinear analysis beyond what a mere linear spectral study can reveal. 
\end{remark}

\begin{remark}\label{re7.5}
By replacing the time-varying functions $K(t)$, $\Gamma(t)$, and $\gamma(t)$ with their average values $K^*$, $\Gamma^*$, and $\gamma^*$, in \eqref{e3.75} we recover identical global asymptotic stability results under the static control strategy 
\begin{align}\label{e3.99}
P(t)=P^*,\; \text{a.e.}\;t\in \mathbb{R}_+.
\end{align}
In the autonomous formulation, where all parameters are held constant, these fixed values provide a baseline for model analysis and equilibrium evaluation. 
\end{remark}
\subsection*{\bf Discussion on the control strategy \texorpdfstring{$P(t)$}{P(t)}}
In the control law \eqref{e3.75},
\begin{align}
P(t)=P^*+(\Gamma(t)-\Gamma^*)+k_I\Big(\frac{\Gamma^*\gamma^*}{K^*}-\frac{\Gamma(t)\gamma(t)}{K(t)}\Big),
\end{align}
the term $\Gamma(t)-\Gamma^*$ plays the role of a direct feedforward correction : it instantaneously compensates any variation in the growth rate $\Gamma$ and follows its phase shifts. The proportional term is explicitly
\begin{align}
\,k_I\Big(\dfrac{\Gamma^*\gamma^*}{K^*}-\dfrac{\Gamma(t)\gamma(t)}{K(t)}\Big)\,,
\end{align}
which constitutes a feedback mechanism on the normalized demographic pressure $\dfrac{\Gamma\gamma}{K}$. This term acts as a sensor of "demographic energy" and tends to drive the current pressure back toward its nominal value. The parameter $k_I$, representing the static aquatic total population, scales the feedback response : the larger $k_I$, the stronger the control reacts to deviations in demographic pressure. Hence, $k_I$ determines the intensity of the control effort applied. Under periodic forcing, $P(t)$ both tracks and attenuates parametric variations rather than allowing them to amplify the aquatic population; this results in a reduction of the oscillation amplitude of $I(a,t)$ and in a re-centering of the dynamics around a periodic equilibrium of smaller amplitude. By construction, $P(t)$ aims to restore the normalized pressure and promotes global asymptotic stability around the target state $I^*(a)$. The rigorous proof of this stabilization relies on the invariance of an appropriate attraction region (e.g., $\mathcal A$ defined in \eqref{e3.98}) and on the proper choice of $k_I$ and $P^*$, which guarantee the positivity and effectiveness of the control.

\begin{remark}
The condition \eqref{e3.78} defines a criterion ensuring the global stability of the equilibrium. When the growth rate $\Gamma(t)$ is sufficiently high, such as at the beginning of a favorable period when dormant eggs hatch en masse and larvae emerge from their latent state, the population density increases rapidly. This rapid growth, followed by a progressive decrease in the availability of environmental resources $K(t)$, intensifies intra-aquatic competition $\gamma(t)$. The quadratic regulation term then dominates, preventing indefinite population expansion. This inequality \eqref{e3.78} therefore provides a clear operational criterion : reducing the carrying capacity $K(t)$ amplifies the effect of intra-aquatic competition $\gamma(t)$, shifting the system toward a stable regime. Moreover, this condition reflects the effectiveness of control measures modeled by $-I(a,t)P(t)$ : any intervention that increases aquatic mortality (via the control $P(t)$) is equivalent to decreasing $K(t)$ and enhancing\ $\gamma(t)$, thereby facilitating satisfaction of the condition. From a biological perspective, low resource availability intensifies competition among individuals, reducing the survival of aquatic stages. Control measures that reduce $K(t)$ therefore effectively increase $\gamma(t)$, promoting equilibrium stability and ensuring sustained control effectiveness.
\end{remark}

\begin{remark}
The global stability analysis carried out in this section rigorously confirms that biological control targeting the aquatic stages of mosquito populations can, under specific structural conditions on the system’s parameters, lead to asymptotic stabilization of the equilibrium, thereby reflecting a sustained reduction in vector dynamics.

Beyond the theoretical results, several historical and contemporary examples support the effectiveness of such control strategies. A notable illustration is the case of Mandatory Palestine in the 1920s \cite{ref72}, where malaria was eliminated not through insecticides or vaccination, but primarily via the continuous destruction of larval breeding sites through systematic management of aquatic habitats, supported by community education and involvement.

Similar outcomes have been observed in regions such as Zanzibar, southern Tanzania, and rural India, where environmental sanitation, drainage, and the introduction of natural predators such as larvivorous fish have significantly reduced malaria transmission.

These observations, combined with our mathematical framework, suggest that biological control of mosquito aquatic populations is not only ecologically sustainable but also structurally effective in reducing malaria endemicity. This strategy, often complementary to chemical or genetic approaches, offers a relevant and efficient lever in malaria control policies, particularly in rural or semi-urban settings where continuous deployment of conventional interventions may be more challenging.
\end{remark}
\section{\bf Numerical simulation}\label{sec8}
 In this subsection we provide some numerical illustrations of our theoretical results. For the simulation of model \eqref{eq3.1}, the age discretization can be performed using the finite difference method on $(0,A).$ For more details, the reader is invited to consult \cite{ref58, ref57}. However, we perform here the numerical resolution by exploiting the strategies used in the theoretical analysis of stabilization. To do this, we first define the mortality functions $\mu_i$, the fertility functions $\beta$, the emergence rate $w$ as well as the functions $\gamma,\Gamma, K$. Next, we calculate $\zeta_i$ using the characteristic equation \eqref{e3.49}. Moreover, we calculate the new births $(I^*(0),F^*(0),M^*(0))$ using equation \eqref{e3.51}, which allows us to find the static densities $(I^*(a),F^*(a),M^*(a))$. Thus, the simulation result is based on the transformation \eqref{e3.58}. 
Then, for $\beta=3.68e^{-0.5a},\;w(a)=\frac{-a^2+4a}{16},\;\mu_I=\mu_F = \mu_M= 0.36e^{0.5a},\; A=4$, we obtain $\zeta_I = \zeta_F = 0.01.$ 
\begin{figure}[H]
    \centering
    \begin{subfigure}{0.32\textwidth}
        \centering
\includegraphics[width=\linewidth]{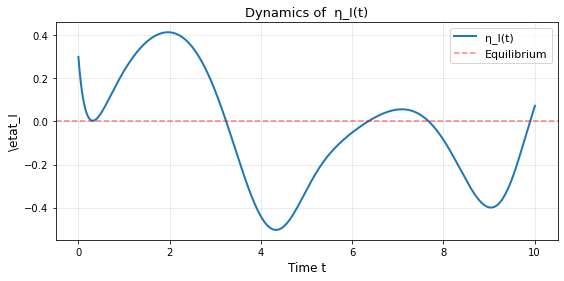}
    \end{subfigure}
    \hfill
    \begin{subfigure}{0.32\textwidth}
        \centering
\includegraphics[width=\linewidth]{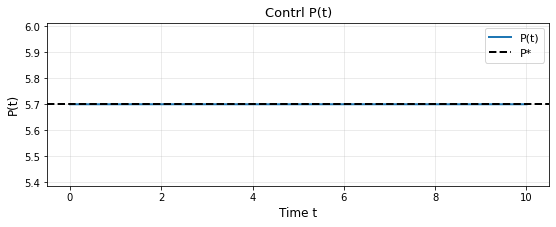}
    \end{subfigure}
    \hfill
    \begin{subfigure}{0.32\textwidth}
        \centering
\includegraphics[width=\linewidth]{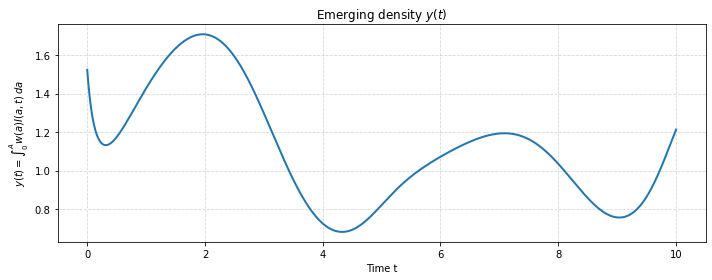}
    \end{subfigure}
    \hfill
    \begin{subfigure}{0.3\textwidth}
 \centering
        \includegraphics[width=\linewidth]{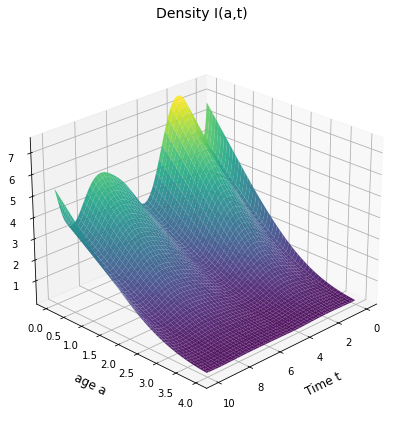}
    \end{subfigure}
     \hfill
    \begin{subfigure}{0.3\textwidth}
       \centering
        \includegraphics[width=\linewidth]{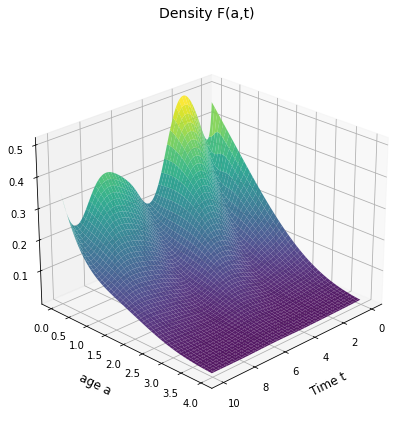}
    \end{subfigure}
     \hfill
    \begin{subfigure}{0.3\textwidth}
        \centering
        \includegraphics[width=\linewidth]{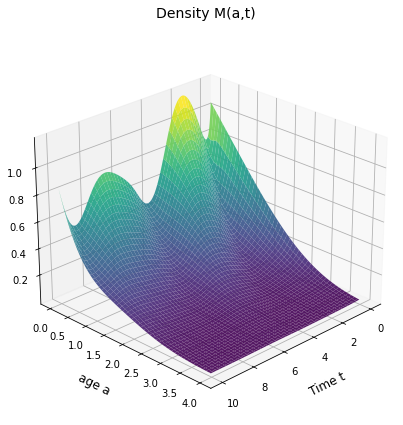}
    \end{subfigure}
\caption{With $\eta_{I0}=0.3$ as the initial condition, the environmental parameters are chosen to be periodic:
$K(t)=K^*(1+0.2\sin(3\pi t/T)),\; \Gamma(t)=\Gamma^*(1+0.3\sin(4\pi t/T))\; \text{and}\; \gamma(t)=\gamma^*(1+0.2\cos(3\pi t/T)).$
By choosing $P(t)=P^*$, we observe that the temporal heterogeneity of these covariates undermines the effectiveness of a static control $P^*$ in ensuring the system’s stability.}  
\end{figure}
\begin{figure}[H]
    \centering
    \begin{subfigure}{0.32\textwidth}
        \centering
\includegraphics[width=\linewidth]{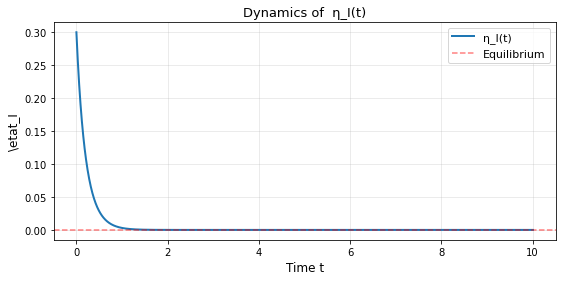}
    \end{subfigure}
    \hfill
    \begin{subfigure}{0.32\textwidth}
        \centering
\includegraphics[width=\linewidth]{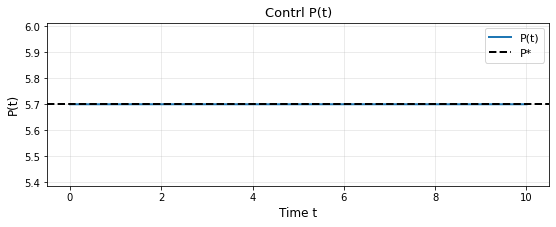}
    \end{subfigure}
    \hfill
    \begin{subfigure}{0.32\textwidth}
        \centering
\includegraphics[width=\linewidth]{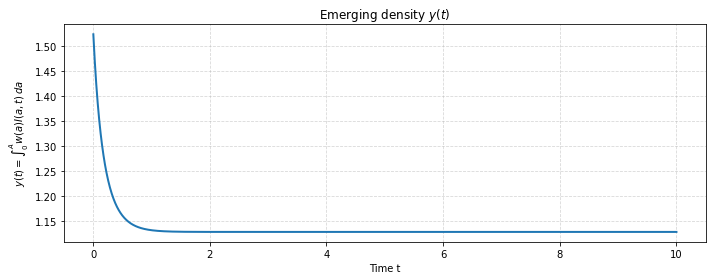}
    \end{subfigure}
    \hfill
    \begin{subfigure}{0.3\textwidth}
 \centering
        \includegraphics[width=\linewidth]{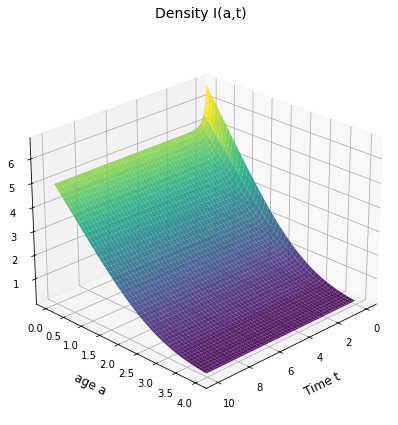}
    \end{subfigure}
     \hfill
    \begin{subfigure}{0.3\textwidth}
       \centering
        \includegraphics[width=\linewidth]{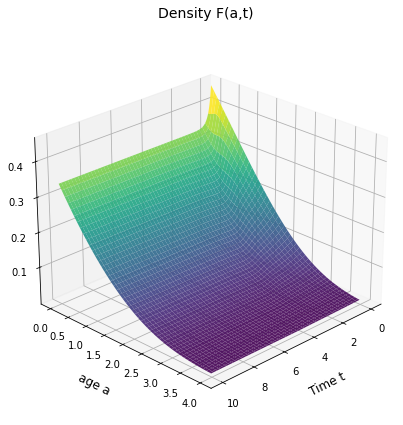}
    \end{subfigure}
     \hfill
    \begin{subfigure}{0.3\textwidth}
        \centering
        \includegraphics[width=\linewidth]{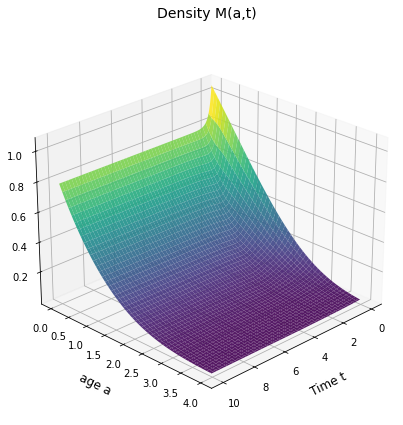}
    \end{subfigure}
\caption{Autonomous case : $\eta_{I0}=0.3\,\text{the initial condition}$ and $P(t)=P^*,\;K(t)=K^*,\;\Gamma(t)=\Gamma^*,\;\gamma(t)=\gamma^*$(see  Remark \ref{re7.5})} 
\end{figure}
\begin{figure}[H]
    \centering
    \begin{subfigure}{0.32\textwidth}
        \centering
\includegraphics[width=\linewidth]{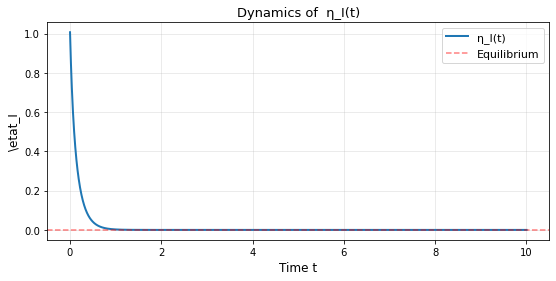}
    \end{subfigure}
    \hfill
    \begin{subfigure}{0.32\textwidth}
        \centering
\includegraphics[width=\linewidth]{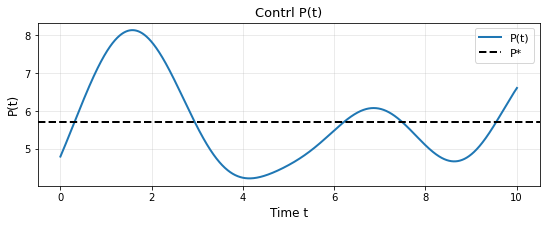}
    \end{subfigure}
    \hfill
    \begin{subfigure}{0.32\textwidth}
        \centering
\includegraphics[width=\linewidth]{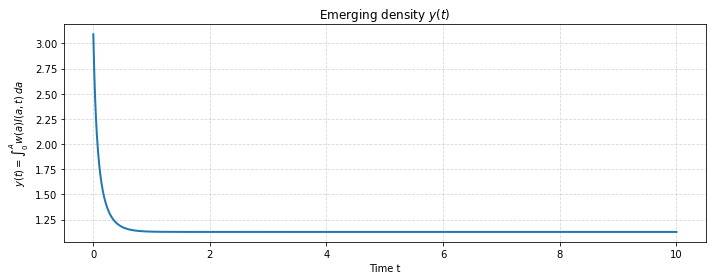}
    \end{subfigure}
    \hfill
    \begin{subfigure}{0.3\textwidth}
 \centering
        \includegraphics[width=\linewidth]{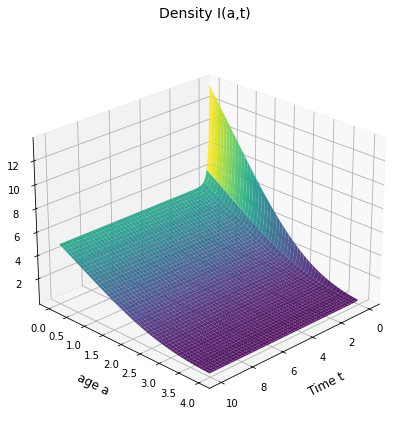}
    \end{subfigure}
     \hfill
    \begin{subfigure}{0.3\textwidth}
       \centering
        \includegraphics[width=\linewidth]{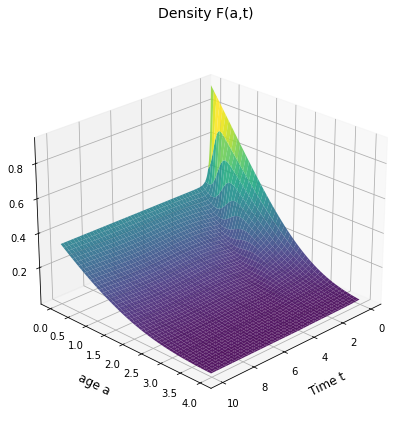}
    \end{subfigure}
     \hfill
    \begin{subfigure}{0.3\textwidth}
        \centering
        \includegraphics[width=\linewidth]{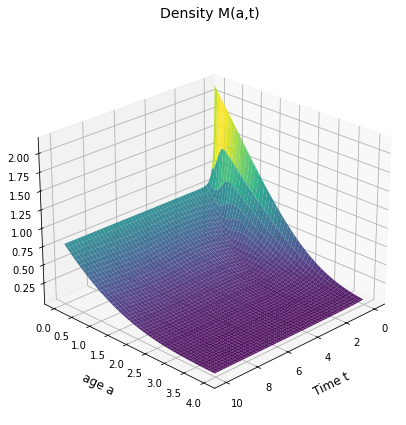}
    \end{subfigure}
\caption{The time-dependent (non-autonomous) case: the evolution of $\eta_I$ in \eqref{e3.55} under the time-dependent control given in \eqref{e3.75}, with initial value $\eta_{I0}=1.007.$} 
\end{figure}
\begin{figure}[H]
    \centering
    \begin{subfigure}{0.32\textwidth}
        \centering
\includegraphics[width=\linewidth]{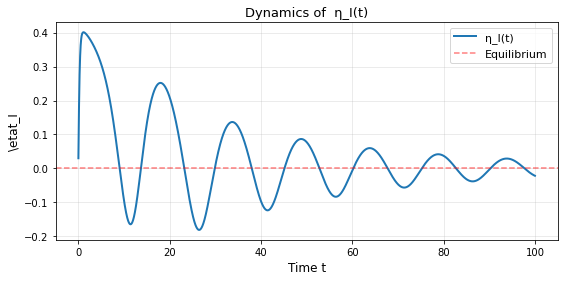}
    \end{subfigure}
    \hfill
    \begin{subfigure}{0.32\textwidth}
        \centering
\includegraphics[width=\linewidth]{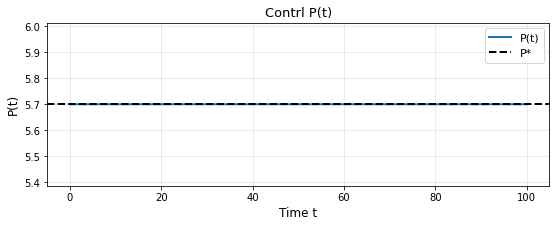}
    \end{subfigure}
    \hfill
    \begin{subfigure}{0.32\textwidth}
        \centering
\includegraphics[width=\linewidth]{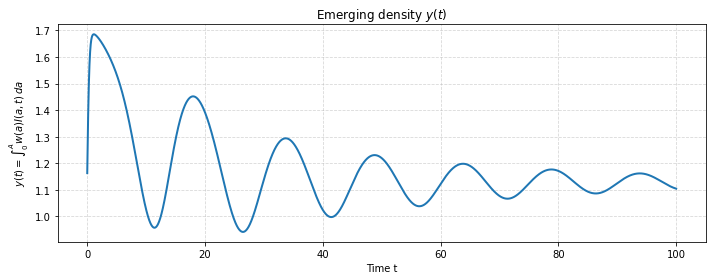}
    \end{subfigure}
    \hfill
    \begin{subfigure}{0.3\textwidth}
 \centering
        \includegraphics[width=\linewidth]{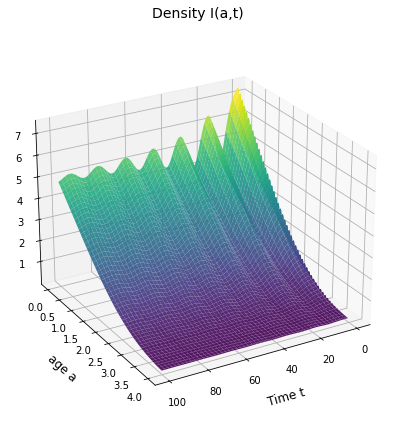}
    \end{subfigure}
     \hfill
    \begin{subfigure}{0.3\textwidth}
       \centering
        \includegraphics[width=\linewidth]{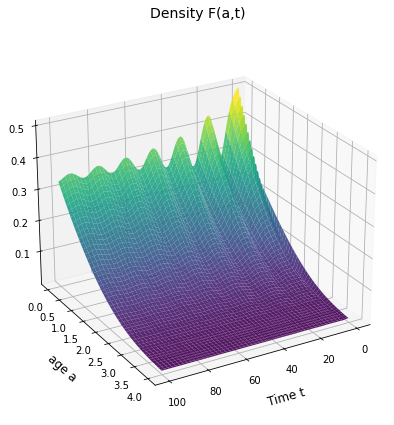}
    \end{subfigure}
     \hfill
    \begin{subfigure}{0.3\textwidth}
        \centering
        \includegraphics[width=\linewidth]{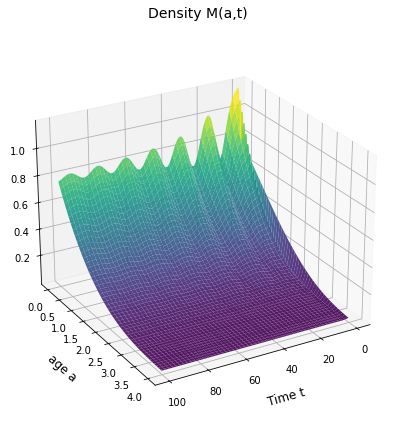}
    \end{subfigure}
\caption{$\eta_{I0}=0.03;\; K(t)=K^*+0.5K^*e^{-t/10},\; \Gamma(t)=\Gamma^ * (1.0 + 0.25 e^{-t / 40.0}\sin(2\pi t / 15.0)),\; \gamma(t)=\gamma^*(1+0.3e^{-t/8}\sin(2\pi t/20))$ with $P(t)=P^*.$}
\end{figure}
\begin{figure}[H]
    \centering
    \begin{subfigure}{0.32\textwidth}
        \centering
\includegraphics[width=\linewidth]{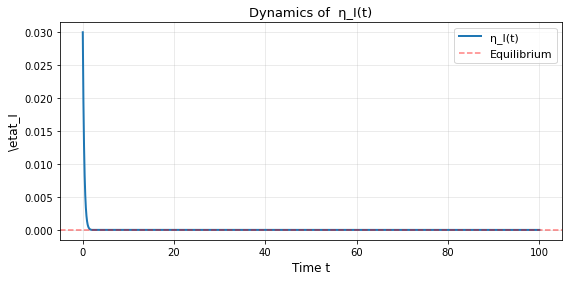}
    \end{subfigure}
    \hfill
    \begin{subfigure}{0.32\textwidth}
        \centering
\includegraphics[width=\linewidth]{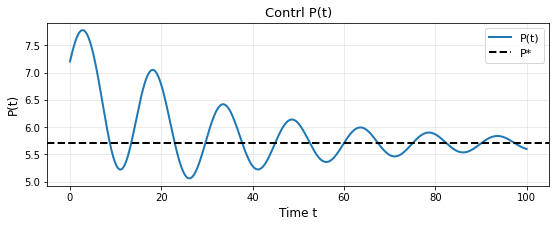}
    \end{subfigure}
    \hfill
    \begin{subfigure}{0.32\textwidth}
        \centering
\includegraphics[width=\linewidth]{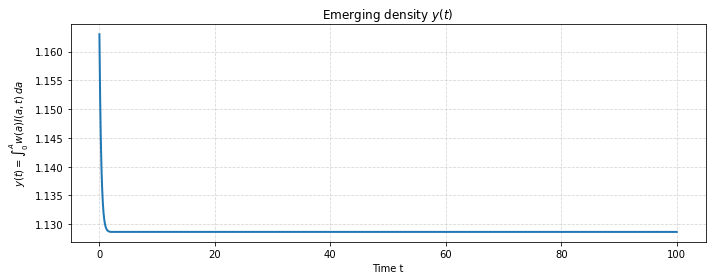}
    \end{subfigure}
    \hfill
    \begin{subfigure}{0.3\textwidth}
 \centering
        \includegraphics[width=\linewidth]{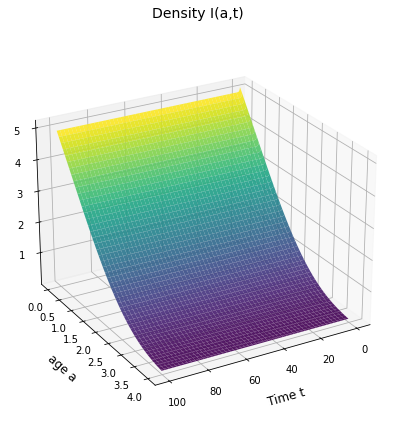}
    \end{subfigure}
     \hfill
    \begin{subfigure}{0.3\textwidth}
       \centering
        \includegraphics[width=\linewidth]{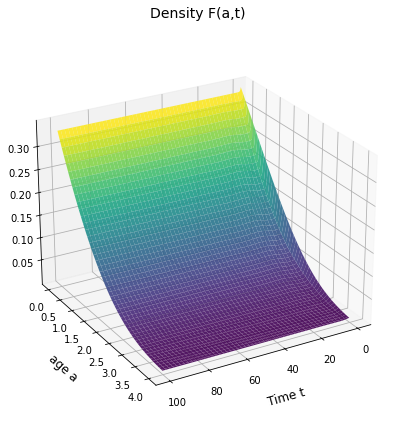}
    \end{subfigure}
     \hfill
    \begin{subfigure}{0.3\textwidth}
        \centering
        \includegraphics[width=\linewidth]{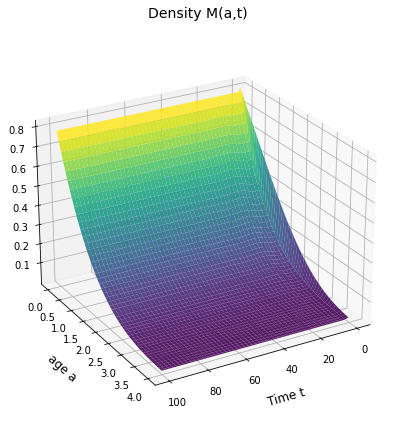}
    \end{subfigure}
\caption{$\eta_{I0}=0.03;\; K(t)=K^*+0.5K^*e^{-t/10},\; \Gamma(t)=\Gamma^ * (1.0 + 0.25 e^{-t / 40.0}\sin(2\pi t / 15.0)),\; \gamma(t)=\gamma^*(1+0.3e^{-t/8}\sin(2\pi t/20))$ with the control \eqref{e3.75}.}
\end{figure}
\begin{figure}[H]
    \centering
    \begin{subfigure}{0.45\textwidth}
        \centering
\includegraphics[width=\linewidth]{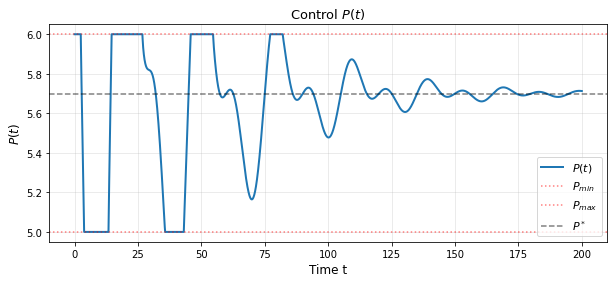}
        \caption{}
    \end{subfigure}
    \hfill
    \begin{subfigure}{0.45\textwidth}
        \centering
\includegraphics[width=\linewidth]{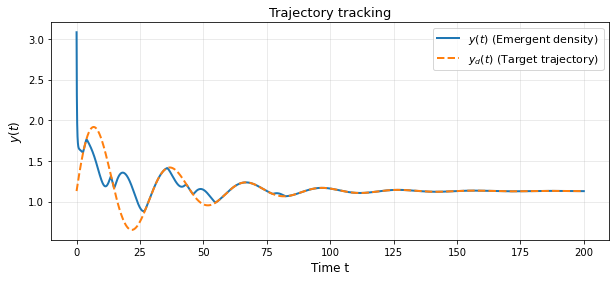}
       \caption{}
    \end{subfigure}
     \hfill
    \begin{subfigure}{0.45\textwidth}
        \centering
\includegraphics[width=\linewidth]{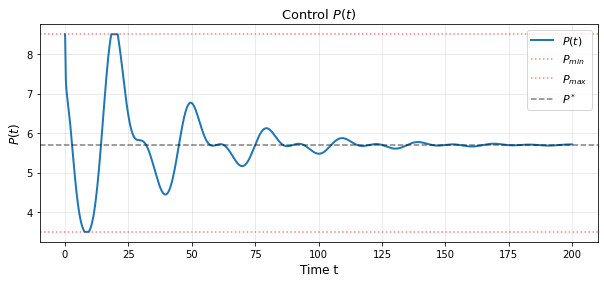}
       \caption{}
    \end{subfigure}
     \hfill
    \begin{subfigure}{0.45\textwidth}
        \centering
\includegraphics[width=\linewidth]{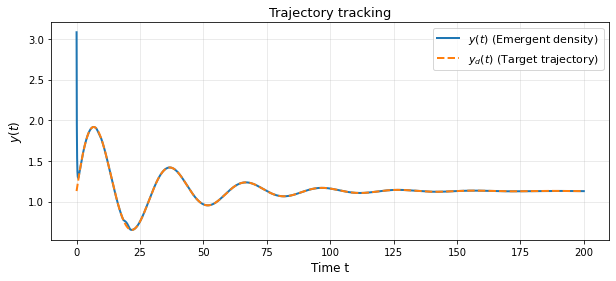}
        \caption{}
    \end{subfigure}
\caption{These figures illustrate the control of the emergent-stage density $y_d(t) = y^* + \sin\left(\frac{2\pi t}{30}\right) e^{-\frac{t}{30}}.$ The interval $(P_{\min},P_{\max})$ sets the amplitude of the available control action. The larger this range, the stronger and more rapid the actions the controller can apply; consequently $y$ converges more quickly to the target trajectory $y_d$ (see Figures C and D). Conversely, if the range is too narrow, the control action is limited and the system is frequently saturated (Figures A and B): the controller remains clamped at the bounds for a large fraction of the time, the response slows down and the convergence rate decreases. In practice, widening the range improves tracking speed, but this must be weighed against operational constraints.}
\end{figure}
\begin{figure}[H]
    \centering
    \begin{subfigure}{0.45\textwidth}
        \centering
\includegraphics[width=\linewidth]{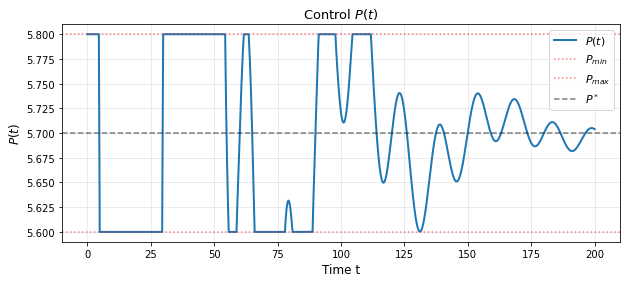}
        \caption{}
    \end{subfigure}
  \hfill
    \begin{subfigure}{0.45\textwidth}
        \centering
\includegraphics[width=\linewidth]{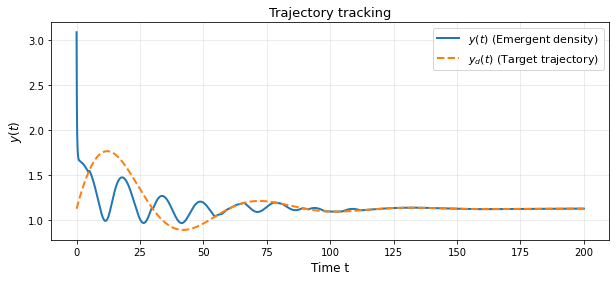}
       \caption{}
    \end{subfigure}
    \hfill
    \begin{subfigure}{0.45\textwidth}
        \centering
\includegraphics[width=\linewidth]{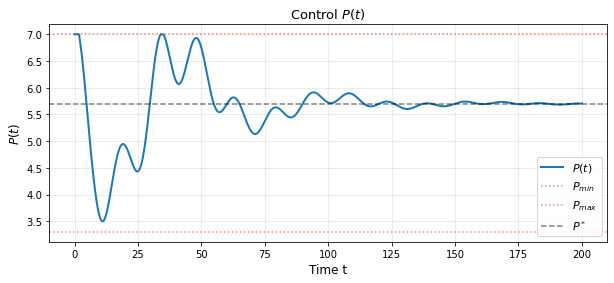}
       \caption{}
    \end{subfigure}
    \hfill
    \begin{subfigure}{0.45\textwidth}
        \centering
\includegraphics[width=\linewidth]{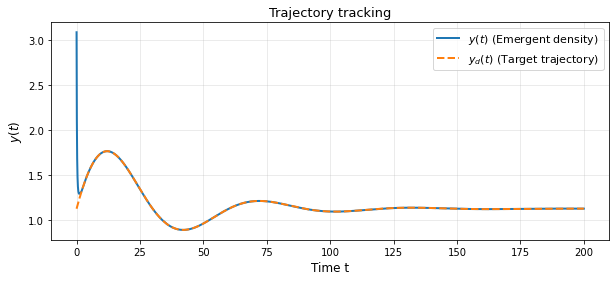}
       \caption{}
    \end{subfigure}
\caption{Figures A and B correspond to scenarios with a constrained control range ($P\in(5.6,5.8)$). Figures C and D correspond to scenarios with a relaxed control range. These comparisons underscore how the width of the admissible control set influences the speed and effectiveness of regulation of the emerging population.}
\end{figure}
\subsubsection*{\bf Comments and conclusions}
These numerical results highlight the influence and effectiveness (while maintaining a realistic character) of the control exerted on the aquatic population on the overall dynamics of system \eqref{eq3.1}. In the case of tracking the emerging population toward a desired trajectory $y(t)=y_d(t)$, the robustness of the control depends on the range of admissible control values $(P_{\min},P_{\max})$. The wider this interval, the greater the control’s margin for action and the more effectively it can ensure the convergence of $y$ toward $y_d$. In the presence of environmental covariates that make the model non-autonomous, a control law that responds proportionally to the temporal variations of the parameters (which significantly affect the dynamics) is sufficient to produce a marked influence on the overall dynamics.
\section{\bf Conclusion and outlook}\label{sec9}
In the context of the present work, the control of mosquito dynamics is based on regulating the emergence of the aquatic population density. The control of the emerging population is based on the design of a feedforward control and a feedback control associated with Lyapunov functions. The feedforward control ensures tracking of the desired state, while the feedback control guarantees the system’s stability in the face of various disturbances. This study shows that it is possible to reduce malaria transmission by lowering the density of mosquito vectors below a critical threshold through the control of the emergence of the aquatic population. It opens research avenues toward the study of controllability (exact or null) of the mosquito model \eqref{eq3.1} and, more broadly, toward the analysis of the turnpike property for mosquito dynamics. Finally, a natural extension is to incorporate the spatial dimension into the study of the dynamics and control.
\appendix
\section{\bf}\label{annexe:A}
\begin{proof}[of Lemma \ref{le3.4}]




By multiplying the equations of system \eqref{eq3.1} respectively by the functions $\pi_{0,I}$, $\pi_{0,F}$, and $\pi_{0,M}$, and then integrating by parts over the interval $(0, A)$, we obtain 
\begin{align}\label{e3.59}
     \left\langle \pi_{0,F}(a), \partial_ tF(a,t)\right\rangle= r\left\langle \pi_{0,F}(0)w(a),I(a,t)\right\rangle+\left\langle\partial_a\pi_{0,F}(a)-\pi_{0,F}(a)(\mu_F(a)+\zeta_F), F(a,t)\right\rangle +\left\langle\pi_{0,F}(a),(\zeta_F-\displaystyle\int_{0}^A \gamma(t)F(a,t)da)F(a,t)\right\rangle,
\end{align}

\begin{align}\label{e3.60}
     \left\langle \pi_{0,M}(a), \partial_ tM(a,t)\right\rangle= (1-r)\left\langle \pi_{0,M}(0)w(a),I(a,t)\right\rangle+\left\langle\partial_a\pi_{0,M}(a)-\pi_{0,M}(a)(\mu_M(a)+\zeta_M), M(a,t)\right\rangle
\end{align}
\begin{align*}
    +\left\langle\pi_{0,M}(a),(\zeta_M-\displaystyle\int_{0}^A \gamma (t)M(a,t)da)M(a,t)\right\rangle,
\end{align*}

\begin{align}\label{e3.61}
    \left\langle \pi_{0,I}(a),\partial_ tI(a,t)\right\rangle=\left\langle \pi_{0,I}(0)\beta(a,m),F(a,t)\right\rangle+\left\langle \partial_a\pi_{0,I}(a)-\pi_{0,I}(a)(\mu(a,p(t))+\zeta_I), I(a,t)\right\rangle+\left\langle\pi_{0,I}(a), (\zeta_I-P(t))I(a,t)\right\rangle
\end{align}

\begin{align*}
    +\left\langle\pi_{0,I}(a),(\Gamma(t)-\dfrac{\Gamma(t)\gamma(t)}{K(t)}\displaystyle\int_{0}^AI(a,t)da)I(a,t)\right\rangle.
\end{align*}
By summing them, we obtain

\begin{align}\label{e3.62}
    \left\langle \pi_{0,F}(a), \partial_ tF(a,t)-(\zeta_F-\displaystyle\int_{0}^A \gamma (t)F(a,t)da)F(a,t)\right\rangle+ \left\langle \pi_{0,I}(a),\partial_ tI(a,t)-(\zeta_I-P(t)+\Gamma(t)-\dfrac{\Gamma(t)\gamma(t)}{K(t)}\displaystyle\int_{0}^AI(a,t)da)I(a,t)\right\rangle+
\end{align}
\begin{align*}
    \left\langle \pi_{0,M}(a), \partial_ tM(a,t)-(\zeta_M-\displaystyle\int_{0}^A \gamma(t) M(a,t)da)M(a,t)\right\rangle=0,
\end{align*}
with 

\begin{align}\label{e3.63}
\begin{cases}
\mathcal{D}^*\pi_{0,I}(a)=\partial_a\pi_{0,I}(a)-\pi_{0,I}(a)(\mu(a,p(t))+\zeta_I)+r\pi_{0,F}(0)w(a)+(1-r)\pi_{0,M}(0)w(a),\quad \pi_{0,I}(A)=0,\\
\mathcal{D}^*\pi_{0,F}(a)=\partial_a\pi_{0,F}(a)-\pi_{0,F}(a)(\mu(a)+\zeta_F)+\pi_{0,I}(0) \beta(a,m),\qquad    \pi_{0,F}(A)=0,\\
\mathcal{D}^*\pi_{0,M}(a)=\partial_a\pi_{0,M}(a)-\pi_{0,M}(a)(\mu_M(a)+\zeta_M),\qquad \pi_{0,M}(A)=0.
\end{cases}
\end{align}
and $\pi_{0j}(a)=\displaystyle\int_a^{A}w(s)e^{\int_s^a(\zeta_j+\mu_j(l)dl}ds\; i\in \lbrace F\;M\rbrace.$ For all  functions $\pi_{0,F},\;\pi_{0,I},\; \pi_{0,M}$ in $L^2(0,A)$ implies that

\begin{align}\label{e3.64}
    \partial_t F(a,t)\;=\;\Bigl(\zeta_F-\!\!\int_{0}^{A}\gamma(t)\,F(a,t)\,da\Bigr)F(a,t),\;\quad
\partial_t M(a,t)\;=\;\Bigl(\zeta_M-\!\!\int_{0}^{A}\gamma(t)\,M(a,t)\,da\Bigr)M(a,t),
\end{align}
and 
\begin{align}\label{e3.65}
\partial_t I(a,t)\;=\;\Bigl(\zeta_I - P(t) + \Gamma(t) - \tfrac{\Gamma(t)\gamma(t)}{K(t)}\!\!\int_{0}^{A}I(a,t)\,da\Bigr)I(a,t), \quad\text{a.e.}\; (a,t)\in (0,A)\times (0,T).
\end{align}
Consequently
\begin{align*}
    \partial_t\eta_I(t)=\zeta_I-P(t)+\Gamma(t)-\dfrac{\Gamma(t)\gamma(t)}{K(t)}\displaystyle\int_{0}^AI(a,t)da=\zeta_I+R_I(t),
\end{align*}
and from transformation \eqref{e3.52}, we get 
\begin{align}\label{e3.66}
\partial_t\eta_I(t)=\zeta_I-P(t)+\Gamma(t)-\dfrac{\Gamma(t)\gamma(t)}{K(t)}e^{\eta_I}\displaystyle\int_{0}^A (1+\psi_I(t-a))I^*(a)da.
\end{align}
On the other hand, by definition
\begin{align}\label{e3.67}
    \psi_I(t)
 = \frac{I(0,t)\,e^{-\eta_I(t)}}{I^*(0)} \;-\; 1\Longrightarrow\psi_I(t)=\displaystyle\int_0^Ag_F(a)\psi_F(t-a)F^*(a)da.
\end{align}
By analogy, we obtain
\begin{align}\label{e3.68}
\begin{cases}
\psi_F(t)=\displaystyle\int_0^Ag_I(a)\psi_I(t-a)I^*(a)da,\\
\psi_M(t)= \displaystyle\int_0^Ag_I(a)\psi_I(t-a)I^*(a)da.
\end{cases}
\end{align}

By applying transformation \eqref{e3.52}, we obtain equation \eqref{e3.58}.


\end{proof}
\paragraph{\bf Acknowledgement}
\textbf{The authors wish to thank Prof. Enrique Zuazua  for his comments, suggestions and for fruitful discussions.}\\
\textbf{The author Yacouba Simpore is supported by the Alexander von Humboldt Foundation through an Alexander von Humboldt research fellowship.}
\bibliographystyle{plain}
 \bibliography{biblio_1}
 \vfill 
\end{document}